\newcommand{\ignore}[1]{}
\newtheorem{theorem}{Theorem}
\newtheorem{lemma}[theorem]{Lemma}
\newtheorem{proposition}[theorem]{Proposition}
\newtheorem{definition}[theorem]{Definition}
\newtheorem{remark}[theorem]{Remark}
\DeclareMathOperator{\card}{Card}
\begin{document}

\title{General distribution of consumers in pure Hotelling games}

\author{
Ga\"etan Fournier\thanks{This research was supported by the ISF grant 1585/15}\\
School of Mathematical Sciences,
Tel-Aviv University\\
Schreiber Building, Tel Aviv, Israel, 6997800\\
\texttt{fournier.gtn@gmail.com}
}

\maketitle

\begin{abstract}
A pure Hotelling game is a competition between a finite number of players who select simultaneously a location in order to attract as many consumers as possible. In this paper, we study the case of a general distribution of consumers on a network generated by a metric graph. Because players do not compete on price, the continuum of consumers shop at the closest player's location. Under regularity hypothesis on the distribution we prove the existence of an $\epsilon$-equilibrium in pure strategies and we construct it, provided that the number of players is larger than a lower bound.
\end{abstract}

\noindent
\emph{JEL Classification}: C72, D43, R30\\

\noindent
\emph{Keywords}: approximate Nash equilibria, pure equilibria, location games on networks, Hotelling games, large games.

\section{Introduction}

\subsection{General and pure Hotelling games}

The seminal paper of \cite{hotelling} introduced a model of spatial competition that we now refer to as Hotelling games. It considers the competition between two retailers along a segment where consumers are assumed to be uniformly distributed. In the unique equilibrium both players set their shop in the middle of the segment. A large literature generalized the model and two different classes appeared: the general Hotelling game, where players can decide of their location and also of a selling price. In this framework, a cost function is introduced to model the cost of moving for the consumers. The existing results mostly concern very simple sets of possible locations, as described in the following review of the literature. On the opposite, another part of the literature studies further the case of pure Hotelling game, where the price is not under control of the players. This model applies particularly to the sell of products whose price is fixed, such as newspapers sellers or brand products retailers.\\

The current paper belongs to the second literature, where the assumption of fixed prices makes possible the analysis of the interaction between a larger number of players on more evolved networks for players and consumers. Even though the study of the segment is useful to understand better the dynamics of the interactions, it appears that the network plays an important role in the problem. The network we study in this paper is the one introduced in \cite{palvolgyi11} and \cite{fournier14} and can model a city, where consumers are distributed along different streets that eventually intersect each other. We use a metric graph and suppose that consumers are distributed along the edges, i.e. at convex combinations between two linked vertices. Players can choose a location anywhere in this network and consumers buy a fixed quantity of goods at the closest store. The payoff of the players is the amount of consumers that shop at their store.\\

Pure Hotelling games often assume that consumers are uniformly distributed along the network. This hypothesis simplifies the analysis but is restrictive: such a model does not take into account the  irregularities inherent to a city, such as higher density in city centers. This consideration can not be handled by simple modifications on the network (using dilatation or contraction arguments for example): players care about the distance with respect to the distribution of consumers but consumers are going to the closest player's location regardless of the distribution of the other consumers.\\ 

In this paper, we consider a general distribution of consumers and only assume that this distribution is absolutely continuous with respect to the Lebesgue measure, that can easily be defined on the metric graph. We prove though counterexamples that no general results hold on the existence of exact Nash equilibrium, neither on the existence of approximate Nash equilibrium for an arbitrary number of players. Our main result is that if the distribution of consumers has a positive, bounded and smooth density with respect to the Lebesgue measure, then for any $\epsilon>0$ there exists an $\epsilon$-equilibrium in pure strategies in the pure Hotelling game, provided that the number of players is larger than a lower bound. We give a constructive proof and an explicit bound $N(\epsilon)$ on the number of players. The $\epsilon$-equilibrium is both additive and multiplicative.\\

A brief survey of the literature on pure Hotelling games is made in subsection \ref{subse:literature}. We give a formal description of the problem in \ref{se:model}. We state the main result in section \ref{se:result}. In section \ref{se:examples} we provide the announced counterexamples. The section \ref{se:proof} is devoted to the proofs.\\

\subsection{Literature}\label{subse:literature}

We now discuss the closely related articles, that have in common with the current paper either to be concerned with the class of pure Hotelling games (where only the choice of location is under the control of the players), or to study the case of non-uniform distribution of consumers, or to consider interactions on networks similar to the one considered in the current paper.\\

Let first discuss the case of papers concerned with pure Hotelling games. \cite{marc} consider a uniform distribution of consumers on the unit interval. Consumers take into account their travel distances to their closest player's location, but also their expected queuing times, which depend on the number of consumers choosing the same firm. They provide existence of a refined equilibrium in several particular cases where the number of players is small and even.
\cite{nunez2014competing} and \cite{nunez2015large} consider a pure Hotelling game with a finite set of predefined possible locations for the sellers.
In a mimeo, \cite{palvolgyi11} considers pure Hotelling games on the same network that the current paper, but with uniform distribution of consumers. He states that there always exists a pure Nash equilibrium when the number of players is large enough and provides a constructive proof in some particular cases. He also studies some equilibrium structural properties.
\cite{fournier14} give a general proof for the existence of a pure Nash equilibrium with a large number of players and a uniform distribution of consumers. They also measure the efficiency of the equilibrium. Similarities and differences between \cite{palvolgyi11}, \cite{fournier14} and the current paper methods for constructing an equilibrium with the adequate properties will be discussed in subsection \ref{subsec:eq_step}.\\

We now discuss papers concerned with non uniform distribution of consumers in Hotelling games. \cite{eaton75} study, among a lot of different models, the case of a general distribution of consumers on the unit interval, in the slightly different setting where two players cannot choose the same location. They show that Hotelling's original results only hold under strong hypotheses.
\cite{lederer1986competition} consider a model where consumers are non uniformly distributed on the plane, and study the interactions between two different firms. 
\cite{neven86} analyzes the general location-then-price Hotelling game with two players on the unit interval. He considers increasing densities of consumers towards the center and shows that for some distribution, the two firms locate at opposite ends of the market.
\cite{anderson97} also examines a duopoly in a general Hotelling game, with log-concave density of consumers. He proves that there exists a unique equilibrium in pure strategies if the density is not "too asymmetric" and not "too concave".
\cite{montes15} considers a two stages location-then-price game on the unit interval with non-uniform distribution of consumers. Under strong conditions on the distribution, he studies a refinement of Nash equilibrium and provides an algorithm to analyze some comparative statics.\\

We now discuss the papers concerned with Hotelling games on a network similar to the one considered in the current paper. \cite{palvolgyi11} and \cite{fournier14} are already mentioned above. \cite{heijnen2014price} consider such a network and study the general location-then-price Hotelling game with 2 players.
Some papers consider the graph structure, but suppose that players can locate only on the vertices of the graph, such as \cite{mavronicolas2008voronoi} or \cite{feldmann2009nash}.

\section{The game $\mathcal{H}(n,G,f)$}\label{se:model}

In this section we give a formal description of the model. $\mathcal{H}(n,G,f)$ is the game where $n \geq 2$ players\footnote{In the case where $n=1$ player, every location attracts all the consumers and is therefore a pure equilibrium.} select simultaneously a location in a set $G$, and where the consumers are distributed on $G$ according to the density function $f$ with respect to the Lebesgue measure. The network $G$ is defined in subsection \ref{subse:network}, and the normal form of the game, including payoffs, is given in subsection \ref{subse:normal_form}.

\subsection{The network $G$}\label{subse:network}

The network $G$ on which consumers are distributed is also the set of possible actions for the players. To define formally $G$, we need a triplet  $(V,E,\lambda)$ where $V$ is a finite set of vertices, $E$ is a finite set of edges and $\lambda$ is a vector of length on $E$. For simplicity we suppose the graph $(V,E)$ to be connected. If the edge $e \in E$ links the vertices $u$ and $v$ we denote it $(u,v)$. In our network, players and consumers are not only located in the vertices of the graph but also along the edges. A point along the edge $(u,v)$ is defined as a convex combination of $u$ and $v$. The network $G$ is then defined as the set  $G:=\{(u,v,t) | u,v \in V, (u,v) \in E, t \in [0,1]\}$, and a point in $G$ is called a possible location. If a possible location $y$ is chosen by at least one player for a given actions' profile, it is then referred to as a location (or chosen location) of this profile.  We give an arbitrary orientation to every edge to avoid the confusion between $(u,v,t)$ and $(v,u,1-t)$ that both refer to the same location.\\

We now endow the set $G$ with a metric $d$ that represents the distance that consumers have to travel to go from one point to another in the network. For a given vector of length $\lambda=(\lambda_e)_{e \in E}$, $\lambda_e>0$ is referred to as the length of the edge $e$. From $\lambda$, we derive a distance $d$ on $G$ as follows: if $x_1$ and $x_2$ are 2 points in the same edge $e$, they can be written as $x_{1}=(u,v,t_{1})$ and $x_{2}=(u,v,t_{2})$ for some $t_1,t_2 \in [0,1]$. The distance between $x_1$ and $x_2$ is $d(x_1,x_2)= \lambda_{(u,v)} \times  \vert t_2 - t_1 \vert$. If $x$ and $y$ are not in the same edge, we consider $P(x,y)$ the set of paths between $x$ and $y$ as the set of all sequences $(x_1,\dots,x_n)$ with finite $n$ such that $x_1=x$, $x_n=y$ and such that for all $i \in \{1,\dots,n-1\}$, $x_i$ and $x_{i+1}$ belong to the same edge. The distance $d(x,y)$ is then defined as:

$$d(x,y):= \inf_{(x_1,\dots,x_n)\in P(x,y)} \sum_{i=1}^{n-1} d(x_i,x_{i+1})$$

Given a profile of actions $(x_1,\dots,x_n) \in G^n$ for the players, we can compute how the consumers split: they shop to the closest player's location (ties are discussed in subsection \ref{subse:normal_form}). \\

The distribution of consumers plays its role in the players' payoffs. We first introduce the useful notion of interval $[x_1,x_2] \subset (u,v)$: let $x_1:=(u,v,t_1)$, $x_2:=(u,v,t_2)$ and suppose without loss of generality that $t_1<t_2$. Then $[x_1,x_2]$ is defined as the set of $\{(u,v,t)\}$ for $t\in [t_1,t_2]$.\footnote{Respectively, we define $[x_1,x_2[$, $]x_1,x_2]$, and $]x_1,x_2[$ as the set of $\{(u,v,t)\}$ for $t\in [t_1,t_2[$, $]t_1,t_2]$, or $]t_1,t_2[$.} The terminology interval comes from the straightforward isometric identification between $[x_1,x_2]$ and the real interval $[t_1 \lambda_e,t_2 \lambda_e]$ when the edge $e$ is fixed. If there is no possible confusion on $e$, we sometimes use the abuse of notation $[t_1 \lambda_e,t_2 \lambda_e]$ to denote the interval $[x_1,x_2]$.\\

The Lebesgue measure on $G$ is denoted $\mathcal{L}$ and can easily be defined as a natural extention of the Lebesgue measure on a real interval. A subset of $G$ can indeed be identified with a finite union of subsets of intervals. The uniform distribution is the case where $G$ is endowed with the measure $\mathcal{L}$. In our model, it corresponds to the case where $f=\textbf{1}$, i.e. $f(x)=1$ for every $x \in G$. This situation is studied in \cite{palvolgyi11} and \cite{fournier14}. In this paper, we extend the model to a general non-atomic distribution $\mu$ with density function $f: G \rightarrow \mathbb{R}^{+}$ with respect to $\mathcal{L}$. For a subset $\mathcal{A} \subset G$, the quantity of consumers located in $\mathcal{A}$ is:

$$\int_{\mathcal{A}} f(x) d\mathcal{L}(x)$$

For simplicity, we sometimes use the arbitrary orientation to identify the restriction of $f$ on an edge $e$ with a real function $f_e: [0,\lambda_e] \rightarrow \mathbb{R}^{+}$. When 
three chosen locations $x_1,x_2,x_3$ belong to the same edge $e$, and if the orientation is such that $0 \leq x_1<x_2<x_3 \leq \lambda_e$, we say that the location $x_1$ (resp. $x_3$) is the left (resp. right) neighbor of $x_2$ if there is no other chosen location between $x_1$ and $x_2$ (resp. between $x_2$ and $x_3$).\\

Finally remark that we can remove any vertex $v$ with degree 2 and consider the new edge $(u,w)$, instead of $(u,v)$ and $(v,w)$. In this case we set $\lambda_{(u,w)}=\lambda_{(u,v)}+\lambda_{(v,w)}$. From now on, we can assume without loss of generality that all vertices have a degree different from 2. 

\subsection{The normal form of the game $\mathcal{H}(n,G,f)$}\label{subse:normal_form}

$\mathcal{H}(n, G, f)$ is the Hotelling game on $G$ with $n$ players and with a distribution $f$ of consumers. Its normal form is given by a finite set of players $\{1, \dots, n\}$, the action set $G$ which is the same for every player, and the payoff function $p:=(p_i)_{i \in \{1,\dots,n\}}$ that depends on the chosen profile of actions. We first give an intuitive definition of the payoffs, and equation $(\ref{eq:payoff_function})$ gives a formal definition.\\

The payoff of a player is the quantity of consumers that are going to his shop. A location $x_i \in G$ attracts all the consumers who are closer to $x_i$ than to any other location, plus a share of the consumers who are as close to $x_i$ as to some other locations (all these locations get an equal part of the consumers). The total quantity of consumers that shop in $x_i$ splits equally between the different players in $x_i$.\\

The payoff function has to consider that a location can have different ties with different other locations. More precisely, for a subset $A$ of an actions' profile $\{x_1,\dots,x_n\}$, $D_{A}$ is the set of point in $G$ that are at the same distance from 
all locations $x_i \in A$ and that are strictly closer to them than to any other location. Formally, for $A \subset \{ x_1,\dots,x_n \}$:
\begin{equation}\label{def:C_A}
\begin{split}
D_{A} &= \{y\in G: d(y,x_{i}) = d(y,x_{j}) \text{ for all } x_{i},x_{j}\in A \\
&\qquad\text{ and } d(y,x_{i}) < d(y,x_{\ell})  \text{ for all }x_{i}\in A, x_{\ell} \not\in A\}.
\end{split}
\end{equation}

The domain of attraction of a location $x_i \in G$ refers to the points in $G$ that are (weakly) closer to them than to other locations, i.e. $\displaystyle \bigcup_{{A \subset \{x_1,\dots,x_n \} \atop x_{i}\in A }} D_{A}$.\\

All locations in $A$ get an equal part of $\mu(D_A)$. Moreover, all players in a same location get the same payoff. Formally, the payoff of the player $i$ when the profile of strategy $\boldsymbol{x}=(x_1,\dots,x_n)$ is played is:
 
\begin{equation}\label{eq:payoff_function}
p_{i}(\boldsymbol{x}) = \frac{1}{\card(\{j\in \{1,\dots,n \}: x_{j}=x_{i}\})}\sum_{A \subset \{x_1,\dots,x_n \} \atop x_{i}\in A } \frac{\mu(D_A)}{\card(A)}.
\end{equation}
where $\mu(D_A)= \int_{D_A} f(x) d\mathcal{L}(x)$. From now on, we use the notation $dx$ instead of $d\mathcal{L}(x)$.\\ 

This payoff function has a symmetry property: the payoff of a player does not depend on the identities of the other players. Therefore, the fact that a profile $\boldsymbol{x}$ is or not an equilibrium does not depend on the identities of the players. We sometimes use the term configuration to talk about a profile of actions.\\

\section{Existence of $\epsilon$-equilibrium in $\mathcal{H}(n,G,f)$}\label{se:result}

In this section, we state our main result on the existence of $\epsilon$-equilibrium in $\mathcal{H}(n, G, f)$ in pure strategies.\\

\begin{definition}
A profile of actions $\boldsymbol{x}:=(x_1,\dots,x_n)$ is a multiplicative $\epsilon$-equilibrium of $\mathcal{H}(n,G,f)$ in pure strategies if and only if for all $i \in \{1,\dots,n\}$ and for all $y \in G$ we have:

$$ p_{i}(x_1,\dots,x_{i-1},y,x_{i+1},\dots,x_n) \leq  (1+\epsilon) p_{i}(\boldsymbol{x}) $$

$\boldsymbol{x}$ is an additive $\epsilon$-equilibrium of $\mathcal{H}(n,G,f)$ in pure strategies if and only if for all $i \in \{1,\dots,n\}$ and all $y \in G$, we have:

$$ p_{i}(x_1,\dots,x_{i-1},y,x_{i+1},\dots,x_n) - p_{i}(\boldsymbol{x}) \leq  \epsilon $$
\end{definition}

~~\\

We now define two conditions on the density function $f$.\\

\noindent \textbf{(C1)}: The function $f$ is $K$-Lipschitz with respect to the distance $d$ on $S$.\\

\noindent \textbf{(C2)}: The function $f$ is positively upper and lower bounded: there exist $m,M \in ]0,\infty[$ such that for all $y \in G$, $0 < m \leq f(y) \leq M$

\begin{theorem}{Existence of $\epsilon$-equilibrium.\\}\label{thm:main_thm} 
Let $G$ be the network generated by any connected graph $(V,E)$ and $\lambda$ be any vector of lengths. Suppose that $f$ satisfies \textbf{(C1)} and \textbf{(C2)}. For all $\epsilon>0$ there exists an integer $N(\epsilon)$ such that when the number of players $n$ is larger than $ N(\epsilon)$, there exists a multiplicative $\epsilon$-equilibrium in pure strategies in $H(n,G,f)$. \\

\noindent We give a constructive proof where $N(\epsilon)$ can be taken equal to:\footnote{In particular that $N(\epsilon) \sim \frac{1}{\epsilon}$ when $\epsilon \rightarrow 0$} $$  5 \card(E) +  \left\lceil \frac{5L (M+\frac{\epsilon m}{12 + \epsilon})}{m-\frac{\epsilon m}{12 + \epsilon}}\left(\frac{(12 + \epsilon)K}{2 \epsilon m} + \frac{1}{\min \lambda_e}\right) +   \frac{3 L K (12 + \epsilon)}{2 \epsilon m} \right\rceil$$ where $L:= \int_S f(x)dx$ is the total quantity of consumers in the network, and $\min \lambda_e$ denotes the minimal length among the set $E$ of edges.
\end{theorem}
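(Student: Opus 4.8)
The plan is to give a constructive proof, placing the $n$ players so that (i) every player's domain of attraction carries almost the same mass $c\approx L/n$, and (ii) no unilateral deviation captures more than $(1+\epsilon)$ times that mass. The only obstruction to such a multiplicative bound is that payoffs are measured with $\mu$ while consumers travel according to the distance $d$; conditions \textbf{(C1)} and \textbf{(C2)} are exactly what reconciles the two. I would first fix a target cell length small enough that \textbf{(C1)} makes $f$ nearly constant on each cell: if every cell has length at most $\frac{\epsilon m}{(12+\epsilon)K}$, then $f$ varies by at most $\frac{\epsilon m}{12+\epsilon}$ on it, so $f$ may be replaced on each cell by a constant lying in $[m',M']$ with $m'=m-\frac{\epsilon m}{12+\epsilon}$ and $M'=M+\frac{\epsilon m}{12+\epsilon}$. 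This reduces the problem to the essentially known piecewise-constant situation at the cost of relaxing the density bounds from $(m,M)$ to $(m',M')$, and it accounts for the perturbed constants appearing in $N(\epsilon)$. The budget $\epsilon$ is split into a fixed number of equal pieces of size $\frac{\epsilon}{12+\epsilon}$, one per independent source of slack below.

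\textbf{Construction.} Working in the mass coordinate along each edge, I would drop players at essentially equal mass-increments $c$, so that consecutive distance-midpoints cut out cells of mass $\approx c$. The delicate points are the vertices, and this is where the reserved $5\,\card(E)$ players are spent. At each leaf I place a guard at distance $O(\eta)$ from the degree-$1$ vertex, so that the sliver of market a deviator could trap by sitting between the leaf and the extreme player has mass below the budget. At each branch vertex $v$ of degree $d\ge 3$ I place one guard on every incident edge at small distance $\delta$ from $v$, so that a deviator sitting at or near $v$ is boxed in and can reach at most $d\,M'\delta/2$ of mass along the incident edges, which is forced below the budget by taking $\delta$ small. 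These guards also ensure that the extreme player of each edge keeps a cell of mass $\approx c$, so the uniform lower bound on payoffs is preserved near the vertices.

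\textbf{Bounding deviations.} I would then classify an arbitrary deviation $y$ and bound $\mu(D_{\{y\}})$ against the $n-1$ remaining locations. For $y$ interior to an edge, its cell lies inside a single gap between two adjacent players (or one half-gap when $y$ abuts an existing location), and a short computation shows that a deviation inside a gap captures a sub-interval of length exactly half the gap; since cells carry equal mass, half a gap has mass $\approx c/2$, and the near-constancy of $f$ upgrades $\approx$ to $\le(1+\frac{\epsilon}{12+\epsilon})\tfrac{c}{2}$. Removing the deviating player merges its cell with a neighbor's, but the resulting gap still has mass $\approx 2c$ and half of it is $\approx c$, which is consistent with the $(1+\epsilon)$ target. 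Deviations at a leaf are controlled by the leaf guard, and deviations at or near a branch vertex by the incident guards.

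\textbf{Main obstacle and count.} The crux is the branch-vertex case: a deviator there can in principle siphon mass simultaneously from all $d$ incident edges, so without guards a degree-$d$ vertex would let a deviation collect up to $d/2$ cells, which already breaks the bound for $d\ge 3$. The whole role of the vertex guards is to kill this multi-edge capture, and verifying that they do so while still leaving every guard a payoff at least $\approx c/(1+\epsilon)$ is the main technical difficulty. Granting this, every player earns at least $\underline p$, no deviation exceeds $(1+\epsilon)\,\underline p$, and the profile is a multiplicative $\epsilon$-equilibrium. Finally, taking $c$ as large as the fineness constraint permits makes the number of edge-filling players $\approx L/c$; adding the $5\,\card(E)$ guards and tracking the constants $m',M',\eta,\delta$ yields the displayed bound $N(\epsilon)$, which behaves like $1/\epsilon$ as $\epsilon\to0$.
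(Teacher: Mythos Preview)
Your high-level plan shares the essential idea with the paper: exploit the Lipschitz condition so that on pieces of length at most $\tfrac{\epsilon m}{(12+\epsilon)K}$ the density is nearly constant, place players accordingly, and treat the vertices with a fixed reserve of extra players. However, the execution differs substantially. The paper does not build an $\epsilon$-equilibrium for $f$ directly; instead it first replaces $f$ by a genuine step function $g(\epsilon_1)$, builds an \emph{exact} Nash equilibrium in the auxiliary game $\mathcal{H}(n,G,g(\epsilon_1))$, and only afterwards transfers this to a multiplicative $\epsilon$-equilibrium for $f$ via $\|f-g(\epsilon_1)\|_\infty\le\epsilon_1$. The exact equilibrium is obtained by a very specific placement: on each step of $g$ there are \emph{paired} players at distance $2\theta/g_e^i$ from the step endpoints, single players equally spaced in between, and at every branch vertex $v$ exactly $\deg(v)$ players are stacked \emph{at} $v$ (not near it). This pairing and the vertex stacking are what make the no-deviation check go through cleanly for the piecewise-constant game; your single ``mass-coordinate'' placement with one guard per incident edge is a different construction, and the paper's Proposition~\ref{ex:4_players} is a warning that quantile placements are not equilibria in general, even approximately, without further structure.

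The concrete gap in your sketch is precisely the part you flag as ``the main technical difficulty'' and then grant. With a single guard on each incident edge at distance $\delta$ from a vertex $v$ of degree $d\ge 3$, that guard itself can profitably deviate to $v$: a short computation gives a gain of order $(d-2)\delta f/2$, which you must force below $\epsilon$ times the guard's payoff while simultaneously keeping $\delta$ small enough that an outside deviator to $v$ captures at most $dM'\delta/2\le(1+\epsilon)c$. These two constraints are compatible, but they (and the analogous leaf analysis) must actually be carried out, and doing so will not reproduce the displayed constant $N(\epsilon)$, which is specific to the paper's paired construction and the bound $\Phi(\epsilon_1)=\tfrac{12\epsilon_1}{m-\epsilon_1}$. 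In short: your route is plausible and arguably more direct, but the paper's detour through an exact equilibrium for the step function is what makes the vertex case routine rather than ``the crux'', and your claim that the same explicit $N(\epsilon)$ falls out is not justified.
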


\begin{remark}
We can weaken condition \textbf{(C1)} to the following condition: The restriction of $f$ on $(0,\lambda_e)$, the interior of any edge $e$, is $K$-Lipschitz. With such a condition, the function $f$ can be discontinuous in the vertices.
\end{remark}

\begin{remark}
There exists also a lower bound $N'(\epsilon)$ on the number of players that guarantees the existence of an additive $\epsilon$-equilibrium in pure strategies in $H(n,G,f)$. Because the payoff of any player is always bounded by $L$, this existence is just a corollary of Theorem \ref{thm:main_thm}. Nevertheless, such a result is somehow trivial: when the number of players is large, their payoffs are small because they share the fixed quantity $L$, and any profile of actions turns to be an additive equilibrium.
\end{remark}

\begin{proof}

The proof of theorem \ref{thm:main_thm} is detailed in section \ref{se:proof}.  We give here a sketch of the proof.\\

The first step is detailed in subsection \ref{subsec:approx}: we approximate the density function $f$ by a step function $g(\epsilon_1)$, where $\epsilon_1$ is a parameter playing a role in the length of the steps. We prove that because $f$ is $K$-Lipschitz, the step function $g(\epsilon_1)$ is such that $\left\| f-g(\epsilon_1) \right\|_{\infty} \leq \epsilon$ when $\epsilon_1$ is small enough.\\

The second step is detailed in subsection \ref{subsec:eq_step}: we give a constructive proof that if \textbf{(C2)} holds, there exists an (exact) equilibrium in pure strategies in the game $\mathcal{H}(n,S,g(\epsilon_1))$, where consumers are distributed according to the density function $g(\epsilon_1)$, and where the number of players $n$ is larger than a lower-bound $N(\epsilon_1)$. This lower bound increases when $\epsilon_1$ goes to zero.\\

We conclude in subsection \ref{subsec:eps_eq} by showing that if $\epsilon_1$ is small enough, the equilibrium constructed in the previous step is a multiplicative $\epsilon$-equilibrium in the game $\mathcal{H}(n,S,f)$, where the distribution of the consumers is given by the function $f$. We obtain therefore a lower bound $N(\epsilon)$ on the number of players $n$ that guarantees the existence of a pure multiplicative $\epsilon$-equilibrium in $\mathcal{H}(n,S,f)$.
\end{proof}

\section{A few counterexamples}\label{se:examples}

In this section we provide counterexamples to naive extensions of the state-of-the-art results. The counterexample also illustrate how the analysis of game $\mathcal{H}(n,S,f)$ with a general distribution of consumers $f$ is fundamentally different from the analysis of the game with uniform distribution of consumers $\mathcal{H}(n,S,\textbf{1})$, where $\textbf{1}$ stands for the constant function $x \in G \mapsto 1$. They also highlight the necessity to consider games with a large enough number of players.\\

Proposition \ref{ex:no_eq_monotonic} shows that on the simple network composed of only one edge of length $1$, identified with $[0,1]$, it is possible to have existence of exact equilibrium in $\mathcal{H}(n,[0,1],\textbf{1})$ but non existence in $\mathcal{H}(n,[0,1],f)$ even when $f$ is arbitrary close to $\textbf{1}$. More precisely, it is proved in \cite{eaton75} that for any $n \geq 4$ there exists an equilibrium in pure strategies in the game $\mathcal{H}(n,[0,1],\textbf{1})$. In the case of non uniform distribution of consumers, it is claimed that in the slightly different framework where two players can not chose the same location, there exist some density distributions $f$ such that the game $\mathcal{H}(n,[0,1],f)$ doesn't admit any equilibrium in pure strategies for $n \geq 3$. We give here a more precise result and provide an extensive proof that also covers the case where several players are allowed to play in the same location. Proposition \ref{ex:no_eq_monotonic} is also a counterexample to a possible generalization of the existence of exact equilibrium for a large number of player, as proved in \cite{fournier14} for uniform distribution.

\begin{proposition}\label{ex:no_eq_monotonic} For any $\epsilon>0$, there exists a density function $f$ such that $\| f- \textbf{1}\|_{\infty} \leq \epsilon$ and such that $\mathcal{H}(n,[0,1],f)$ doesn't admit any Nash equilibrium in pure strategies for $n \geq 3$. \end{proposition}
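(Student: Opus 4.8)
The plan is to exhibit a strictly increasing density and show that strict monotonicity is incompatible with the rigid structure forced on any pure equilibrium. I would take $f(x) = 1 + cx$ on $[0,1]$ with $0 < c \le \epsilon$; then $\| f-\textbf{1}\|_{\infty} = c \le \epsilon$, and $f$ is strictly positive and strictly increasing. Throughout I use that on a single edge the boundary between the domains of attraction of two neighboring chosen locations $y_j < y_{j+1}$ is their midpoint $b_j := (y_j+y_{j+1})/2$, so each chosen location's territory is an explicit interval whose mass I can compute with $f$. I write $t_j$ for the mass of the territory of a location $y_j$, $m_j$ for its number of players, and I split $t_j = L_j + R_j$ into the masses lying to the left and to the right of $y_j$ inside its own territory.

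First I would prove that no chosen location can host a single player. A lone interior player can shift by $\delta$ toward higher density: both of its midpoint boundaries move by $\delta/2$, and since $f$ is strictly increasing the mass gained on the right strictly exceeds the mass lost on the left. A lone peripheral player can shift inward, keeping its outer boundary fixed while extending the inner one, again strictly increasing its payoff. Hence in any equilibrium every chosen location satisfies $m_j \ge 2$.

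Next, a self-deviation argument pins the configuration down. A player at $y_j$ earns $t_j/m_j$; by relocating just to the right (resp. left) of $y_j$ it captures a payoff arbitrarily close to $R_j$ (resp. $L_j$), so equilibrium forces $\max(L_j,R_j) \le t_j/m_j$. Since $m_j \ge 2$ gives $t_j/m_j \le t_j/2 \le \max(L_j,R_j)$, all these inequalities are equalities: every location has exactly $m_j = 2$ players and is split evenly, $L_j = R_j = t_j/2$. In particular $n = 2k$ is even, which already settles every odd $n \ge 3$. A complementary cross-deviation — relocating next to another location $y_i$ to capture a payoff close to $\max(L_i,R_i) = t_i/2$ — yields $t_i/2 \le t_j/2$ for all $i,j$, so all territory masses are equal to $L/k$, where $L$ is the total mass of consumers.

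It remains to derive a contradiction from this forced configuration. For two adjacent locations $y_j < y_{j+1}$ the even-split and equal-territory conclusions give $\int_{y_j}^{b_j} f = R_j = L/(2k) = L_{j+1} = \int_{b_j}^{y_{j+1}} f$, while $b_j$ being the midpoint makes the two intervals $[y_j,b_j]$ and $[b_j,y_{j+1}]$ have the same length; since $f$ is strictly increasing the right interval carries strictly more mass than the left, contradicting the equality (such an adjacent pair exists since $k \ge 2$ when $n \ge 3$). I expect the main obstacle to be bookkeeping rather than ideas: one must phrase the infinitesimal moves and the ``relocate next to a location'' deviations rigorously in terms of the payoff function $(\ref{eq:payoff_function})$, and handle uniformly the peripheral locations and the possibility of several players sharing a location, which is precisely the extra generality over the statement quoted from \cite{eaton75}.
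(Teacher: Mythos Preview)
Your proposal is correct and follows essentially the same approach as the paper: take $f(x)=1+\epsilon x$, use side-deviations to force every occupied location to be a pair with equal left/right masses, then obtain a contradiction from the fact that $\int_{y_j}^{b_j} f = \int_{b_j}^{y_{j+1}} f$ is impossible for a strictly increasing $f$ on two intervals of equal length. The only organizational difference is that the paper first rules out multiplicity $\ge 3$ and then rules out singletons, whereas you first rule out singletons and then collapse the $m_j\ge 2$ case via the clean chain $\max(L_j,R_j)\le t_j/m_j\le t_j/2\le \max(L_j,R_j)$; the underlying deviations and the final contradiction are identical.
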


\begin{proof}
Fix an $\epsilon>0$ and let $f: [0,1] \rightarrow \mathbb{R}^+$ defined by $f(x):= 1+\epsilon x$. It is clear that $\| f- \textbf{1}\|_{\infty} \leq \epsilon$. Suppose now that $\boldsymbol{x}=(x_1,\dots,x_n)$ is an equilibrium. Without loss of generality we assume that $x_1 \leq \dots \leq x_n$. 

We first claim that all players are coupled, i.e. that $x_1=x_2<x_3=x_4<\dots<x_{n-1}=x_n$.\footnote{This results implies in particular that there is no equilibrium with an odd number of players.}

Suppose that $m \geq 3$ players share the same location $x_k=x_{k+1}=\dots=x_{k+m-1}$ with $m \geq 3$ players. $x_{k-1}$ (resp. $x_{k+m}$) is its left (resp. right) neighbor (if it has one). We denote $x_{k-1,k}=\frac{x_{k-1}+x_k}{2}$ and $x_{k+m-1,k+m}=\frac{x_{k+m-1}+x_{k+m}}{2}$. In the case where $x_k$ does not have a left (resp. right) neighbor we set $x_{k-1,k}=0$  (resp $x_{k+m-1,k+m}=0$).

$D_{x_k}$, the domain of attraction of $x_k$, is equal to the union of the two intervals $[x_{k-1,k},x_k]$ and $[x_{k+m-1},x_{k+m-1,k+m}]$ (see the red and green intervals in figure \ref{fig:cex} below).

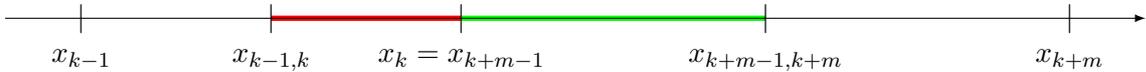
\begin{figure}[H]
\centering

\begin{tikzpicture}[>=latex]

 \draw[draw=white,double=red,double distance=2pt,line width=2pt] (3.5,0)-- (5.99,0);
    
     \draw[draw=white,double=green,double distance=2pt,line width=2pt] (6,0)-- (10,0);
     
    \draw[->] (0,0) --(15,0);
    \node[] at (1,0) {$|$};
    \node[] at (3.5,0) {$|$};
    \node[] at (6,0) {$|$};
    \node[] at (10,0) {$|$};
    \node[] at (14,0) {$|$};
    
    \node[below=8pt] at (1,0) {$x_{k-1}$};
    \node[below=8pt] at (3.5,0) {$x_{k-1,k}$};
    \node[below=8pt] at (6,0) {$x_k=x_{k+m-1}$};
    \node[below=8pt] at (10,0) {$x_{k+m-1,k+m}$};
    \node[below=8pt] at (14,0) {$x_{k+m}$};


\end{tikzpicture}

\caption{\label{fig:cex} Domain of attraction of $x_k$}
\end{figure}

The payoff of player $k$ is then equal to:

$$p_k(x_1,\dots,x_n)=\frac{1}{m} \int_{[x_{k-1,k},x_k] \cup [x_k,x_{k+m-1,k+m}]} (1+\epsilon x) dx$$

Because $m \geq 3$, we have either: $$\int_{[x_{k-1,k},x_k]} (1+\epsilon x) dx > p_k(x_1,\dots,x_n)$$ or: $$\int_{[x_{k},x_{k+m-1,k+m}]} (1+\epsilon x) dx > p_k(x_1,\dots,x_n)$$ 
But player $k$ could get a payoff arbitrary close to $\int_{[x_{k-1,k},x_k]} (1+\epsilon x) dx$ by playing $x_k-\delta$ for a $\delta$ small enough, or arbitrary close to $\int_{[x_{k},x_{k+m-1,k+m}]} (1+\epsilon x) dx$ by playing $x_{k}+\delta$ for a $\delta$ small enough. It proves that at equilibrium, it is not possible that $3$ or more players share a location.\\

Suppose now that there exists a location $x_k \in [0,1]$ with a single player $k$. This player has a left neighbor $x_{k-1}$ (resp. a right neighbor $x_{k+1}$), otherwise he would have a profitable deviation playing $x_k + \delta$ (resp. $x_k - \delta$) for a small enough $\delta$. His payoff is equal to a right trapezoid's area (see figure \ref{fig:cex2} below):

$$p_k(x_1,\dots,x_n)= (x_{k,k+1}-x_{k-1,k} )\left(1+\epsilon \frac{x_{k,k+1}+x_{k-1,k}}{2}\right)$$

\begin{figure}[H]
\centering

\begin{tikzpicture}[>=latex]

\draw[draw=white,double=red,double distance=2pt,line width=2pt] (6,0)-- (8,0);
    
\draw[draw=white,double=green,double distance=2pt,line width=2pt] (8,0)-- (10,0);
     
\draw[->] (0,0) --(15,0);
\draw[->] (0,0) --(0,2.5);

\draw[dash pattern=on 1mm off 1mm] (0,1) -- (6,1);
\draw[dash pattern=on 1mm off 1mm] (0,1.5) -- (10,1.5);

    \node[] at (4,0) {$|$};
    \node[] at (6,0) {$|$};
    \node[] at (8,0) {$|$};
    \node[] at (10,0) {$|$};
    \node[] at (12,0) {$|$};
    
    \draw[-] (6,1) --(6,0);
    \draw[-] (6,1) --(10,1.5);
    \draw[-] (10,0) --(10,1.5);
    
    \node[left=8pt] at (0,1) {$1+\epsilon  x_{k-1,k}$};
    \node[left=8pt] at (0,1.5) {$1+\epsilon  x_{k,k+1}$}; 

    \node[below=8pt] at (4,0) {$x_{k-1}$};
    \node[below=8pt] at (6,0) {$x_{k-1,k}$};
    \node[below=8pt] at (8,0) {$x_{k}$};
    \node[below=8pt] at (10,0) {$x_{k,k+1}$};
    \node[below=8pt] at (12,0) {$x_{k+1}$};

\end{tikzpicture}
\caption{\label{fig:cex2} Domain of attraction and payoff of player $k$ in $x_k$}
\end{figure}
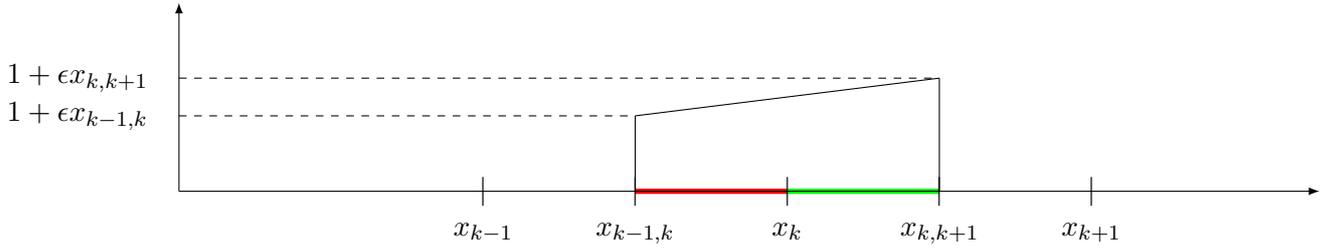

For a small enough $\delta>0$, we have that:

$$p_k(x_1,\dots,x_{k-1},x_k+\delta,x_{k+1},\dots,x_n)= (x_{k,k+1}-x_{k-1,k} )\left(1+\epsilon \frac{x_{k,k+1}+x_{k-1,k}}{2} + \epsilon \delta \right) > p_k(x_1,\dots,x_n)$$

The last inequality is in contradiction with $(x_1,\dots,x_n)$ being an equilibrium. We therefore proved that if $(x_1,\dots,x_n)$ is an equilibrium, then all players are coupled.\\

We now suppose that all players are coupled $x_1=x_2$, $x_3=x_4$, $\dots,$, $x_{n-1}=x_n$, and will find a contradiction. We denote $A_1:= \int_0^{x_1} f(x) dx$, $A_2:=\int_{x_1}^{x_{2,3}} f(x) dx$,$\dots$, $A_{2k-1}=\int_{x_{2k-2,2k-1}}^{x_{2k-1}} f(x) dx$, $A_{2k}=\int_{x_{2k}}^{x_{2k,2k+1}} f(x) dx$, $\dots$, $A_n:= \int_{x_n}^{1} f(x) dx$. We now prove $A_2=A_3$ and that it implies a contradiction.\\

We have $p_1(\boldsymbol{x})=p_2(\boldsymbol{x})=\frac{A_1+A_2}{2}$. If $A_1>A_2$ (resp. $A_2>A_1$) then player 1 could deviate to $x_1-\delta$ (resp. $x_1+\delta$) and get a payoff arbitrary close to $A_1$ (resp. $A_2$) $\geq \frac{A_1+A_2}{2}$. The equilibrium condition gives $A_1=A_2=p_1(\boldsymbol{x})$.  But $A_1<A_3$ is forbidden, otherwise player 1 could deviate to $x_3-\delta$ and have a payoff arbitrary close to $A_3$. A similar argument forbids $A_3<A_1$. We therefore have $A_1=A_2=A_3$. But $A_2=A_3$ is not possible because these two quantities are defined as integral of a strictly increasing function on two intervals of the same length. Therefore there is no equilibrium in the game $\mathcal{H}(n,[0,1],f)$ with $f(x)=1+\epsilon x$ and with $n>2$.
\end{proof}

In the next example, we focus on the case of $4$ players in the unit interval where it is known that there exist a unique (up to permutations of the players) Nash equilibrium when the consumers are uniformly distributed. This equilibrium is such that two players choose $\frac{1}{4}$ and two players choose $\frac{3}{4}$. We prove that with a very large class of distribution of consumers, pure Nash equilibrium may not exist. It illustrates that the previous example does not rely on the choice of a particular density function. Proposition \ref{ex:4_players} is also a good illustration of the following: if $(x_1,\dots,x_n)$ is an exact equilibrium in the uniform case, then the profile $(y_1,\dots,y_n)$ where $y_k$ is the $x_k$-quantile of $f$, is not in general an equilibrium in the case where consumers are distributed according to the density $f$. The reason is that consumers, as opposed to players, do not care about the density $f$, they just shop at the closest location with respect to the distance $d$.\\

\begin{proposition}\label{ex:4_players} Consider $\mathcal{H}(4,[0,1],f)$ the Hotelling game with $4$ players on the unit interval $[0,1]$ and with a density function $f$. We denote $q_1$, $q_2$ and $q_3$ the quartiles of the distribution $f$, i.e. $\int_0^{q_1} f(x)dx=\int_{q_1}^{q_2} f(x)dx=\int_{q_2}^{q_3} f(x)dx=\int_{q_3}^{1} f(x)dx=\frac{1}{4}$. The only possible equilibrium is the configuration where two players select $q_1$ and two players select $q_3$, but this configuration is an equilibrium if and only if $q_2=\frac{q_1+q_3}{2}$. \end{proposition}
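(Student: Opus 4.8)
The plan is to constrain the combinatorial type of an equilibrium, then pin down the exact locations from limiting deviations, and finally test the remaining deviations; throughout I assume $f>0$, so the three quartiles are well-defined distinct points and every subinterval of positive length carries positive mass.

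First I would show that any equilibrium is made of exactly two coinciding pairs, $x_1=x_2=\ell<r=x_3=x_4$. That no location carries three or more players is the argument already used in Proposition~\ref{ex:no_eq_monotonic}: the domain of attraction of such a location splits around it into two parts, one of which carries at least half of the shared mass and hence strictly more than the $1/k$ share of any of its $k\ge 3$ occupants, so an infinitesimal move of one occupant to that side is profitable. That neither extreme location can carry a single player follows because a lone extreme player stays extreme after a small inward move, keeping all the mass on his outer side while gaining mass on the inner side. With four players, forbidding multiplicity $\ge 3$ and lone extremes leaves only the $2,2$ pattern.

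Next, writing $m=(\ell+r)/2$, the left pair shares $[0,m]$ and the right pair shares $[m,1]$, so each left player earns $P_L=\frac12\int_0^m f$ and each right player $P_R=\frac12\int_m^1 f$. A left player sliding to $\ell-\delta$ approaches $\int_0^\ell f$ and sliding to $\ell+\delta$ approaches $\int_\ell^m f$; as these sum to $\int_0^m f=2P_L$, equilibrium forces $\int_0^\ell f=\int_\ell^m f=P_L$, and symmetrically $\int_m^r f=\int_r^1 f=P_R$. A left player jumping to just left of $r$ approaches $\int_m^r f=P_R$, giving $P_R\le P_L$, and the mirror deviation of a right player gives $P_L\le P_R$; hence $P_L=P_R$, and since the four shares sum to $\int_0^1 f=1$ each equals $\frac14$. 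Then $\int_0^m f=\frac12$ forces $m=q_2$, $\int_0^\ell f=\frac14$ forces $\ell=q_1$, $\int_r^1 f=\frac14$ forces $r=q_3$, and $m=(\ell+r)/2$ yields $q_2=(q_1+q_3)/2$. This proves at once that the only candidate is two players at $q_1$ and two at $q_3$, and that $q_2=(q_1+q_3)/2$ is necessary.

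It remains to prove sufficiency: assuming $q_2=(q_1+q_3)/2$, the two-pair configuration at $q_1,q_3$ has midpoint exactly $q_2$ and gives everyone $\frac14$, and I must check that no left-player deviation beats $\frac14$ (the right player being symmetric). Outward moves into $[0,q_1)$ keep him leftmost with capture $\int_0^{(y+q_1)/2}f<\frac14$, and moving onto or past $q_3$ yields at most $\int_{q_3}^1 f=\frac14$ or a three-way split; these are routine. The main obstacle is the interior move to $y\in(q_1,q_3)$, whose capture is the sliding window $M(y)=\int_{(q_1+y)/2}^{(y+q_3)/2}f$ of constant length $q_2-q_1$ with $M(q_1)=M(q_3)=\frac14$: the whole content of sufficiency is the bound $M(y)\le\frac14$. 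This is the delicate point, since $\frac{d}{dy}M(q_1^{+})=\frac12\bigl(f(q_2)-f(q_1)\bigr)$ need not be nonpositive, so $M$ is not automatically maximized at the endpoints and the bound does not follow from the midpoint condition and the quartile identities alone; establishing it is where the real difficulty of the ``if'' direction lies.
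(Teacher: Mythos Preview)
Your necessity argument is essentially the paper's, only carried out a bit more carefully: the paper asserts in one line that $x_1=x_2$ and $x_3=x_4$ (by having an extreme lone player move to the midpoint of the first two), while you separately exclude triple occupancy and lone extremes; but from that point on the four-mass equalities $A=B=C=D$ and the identification $\ell=q_1$, $m=q_2$, $r=q_3$ are the same.

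Where you diverge from the paper is precisely where the paper is weakest. After deriving $q_2=(q_1+q_3)/2$, the paper simply writes ``in this case, the profile $x_1=x_2=q_1$ and $x_3=x_4=q_3$ is clearly an equilibrium,'' without checking the interior deviation $y\in(q_1,q_3)$ that you isolate. Your worry is justified and is in fact a genuine gap in the proposition as stated: the sliding window $M(y)=\int_{(q_1+y)/2}^{(y+q_3)/2}f$ need not be maximized at its endpoints. Concretely, take $q_1=\tfrac14$, $q_3=\tfrac34$, $q_2=\tfrac12$, with $f\equiv 1$ on $[0,\tfrac14]\cup[\tfrac34,1]$ and, on $[\tfrac14,\tfrac34]$, a symmetric density concentrated near $\tfrac12$ (say $f=\tfrac52$ on $[0.4,0.6]$ and $f=0$ elsewhere on that middle half, then smoothed if one insists on positivity). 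All four quartile conditions hold and $q_2=(q_1+q_3)/2$, yet a left player moving to $y=\tfrac12$ captures $M(\tfrac12)=\int_{3/8}^{5/8}f=\tfrac12>\tfrac14$. So the ``if'' direction, as the paper states it, is false without an extra hypothesis on $f$; you were right not to claim it, and the difficulty you flag is not a missing step in your proof but an overstatement in the paper's.
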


\begin{proof}
Denote $x_1,x_2,x_3,x_4$ the locations chosen by the players, and suppose without loss of generality, that $x_1 \leq x_2 \leq x_3 \leq x_4$. First, if $(x_1,x_2,x_3,x_4)$ is an equilibrium then $x_1=x_2$ and $x_3=x_4$, otherwise player 1 (resp. 4) would have a profitable deviation by playing $\frac{x_1+x_2}{2}$ (resp. $\frac{x_3+x_4}{2}$). Moreover, if $(x_1,x_2,x_3,x_4)$ is an equilibrium, the quantities $A:=\int_{0}^{x_1} f(x)dx$, $B:=\int_{x_1}^{\frac{x_1+x_3}{2}} f(x)dx$, $C:=\int_{\frac{x_1+x_3}{2}}^{x_3} f(x)dx$ and $D:=\int_{x_3}^1 f(x)dx$ have to be equal. Indeed the payoffs of players 1 and 2 are $\frac{A+B}{2}$, and the payoff of players 3 and 4 are $\frac{C+D}{2}$. Any player can have a payoff arbitrary close to $A,B,C$ or $D$ by playing, respectively $x_1-\epsilon$, $x_1+\epsilon$, $x_3-\epsilon$ or $x_3+\epsilon$ with $\epsilon$ small enough. This consideration leads to the fact that $A=B=C=D$, and therefore $x_1=x_2=q_1$, $\frac{x_1+x_3}{2}=q_2$ and $x_3=x_4=q_3$. It is then necessary that $q_2=\frac{q_1+q_3}{2}$ and in this case, the profile $x_1=x_2=q_1$ and $x_3=x_4=q_3$ is clearly an equilibrium.
\end{proof}

In the next example, we show that when the number of players is small, there may not exist any $\epsilon$-equilibrium for arbitrary small $\epsilon$.

\begin{proposition}\label{ex:no_eps_3} For any $\epsilon < \frac{1}{14}$, there is no $\epsilon$-equilibrium (additive or multiplicative) in $\mathcal{H}(3,[0,1],f)$.
\end{proposition}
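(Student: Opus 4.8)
The plan is to reduce to ordered profiles, run a finite case analysis, and in each case exhibit an explicit profitable deviation of exactly the kind already used in the proofs of Propositions~\ref{ex:no_eq_monotonic} and~\ref{ex:4_players}; the worst (closest-to-equilibrium) configuration will pin down the constant $\tfrac{1}{14}$. By the anonymity of the payoff noted after~\eqref{eq:payoff_function} I may assume $x_1\le x_2\le x_3$, and I normalise $f$ so that the total mass $\int_0^1 f=1$. I then split the profiles according to their coincidence pattern: (i) $x_1=x_2=x_3$; (ii) exactly two coincide, i.e.\ $x_1=x_2<x_3$ or $x_1<x_2=x_3$; and (iii) $x_1<x_2<x_3$. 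In each pattern the payoff of a player is the $f$-mass of its attraction region, a single interval delimited by consecutive midpoints, except for a coincident pair, whose common region is split into two equal shares.

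The only deviations I need are local relocations next to an occupied point: a player moves to $z\pm\delta$, where $z$ is a currently occupied location (possibly its own, possibly a neighbour's), and then, letting $\delta\to0$, becomes the sole server of the strip on the chosen side of $z$, bounded by the midpoint toward the next location or by an endpoint of $[0,1]$. Its limiting payoff is the $f$-mass of that strip, read off directly from $f$ as in the proofs of Propositions~\ref{ex:no_eq_monotonic} and~\ref{ex:4_players}. Two such moves matter: a member of a coincident pair peeling off to take its half-strip in full, and an isolated (or end) player stepping next to a cluster to annex the adjacent strip. The structural remark that drives everything is that a location lying strictly between two others has an attraction mass independent of its exact position, so it can improve only by relocating to one of the two extreme strips; this is the instability of the middle location in pattern~(iii) and of the lone location in pattern~(ii).

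For each pattern I will lower-bound the best deviation gain as a function of the free position parameters, and then minimise this maximal gain over those parameters: this locates the profile closest to being an equilibrium, and I must check the minimum is still a strictly positive constant. Pattern~(i) is immediate, since peeling off one side of the cluster already yields a gain bounded away from zero. The genuine work, and the main obstacle, is patterns~(ii) and~(iii): there the ``closest to equilibrium'' profile is determined by a one- or two-parameter minimax in which several of the strip-masses above are balanced against one another, and the delicate point is to verify that even at the balancing configuration some player still gains at least $\tfrac{1}{14}$. I expect the binding constant to emerge from exactly one of these two minimax problems.

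Finally I combine the cases. Taking the smallest of the per-pattern constants shows that every $3$-player profile admits a player with a deviation whose gain exceeds $\tfrac{1}{14}$. Hence for any $\epsilon<\tfrac{1}{14}$ this gain is strictly larger than $\epsilon$, so the profile is not an additive $\epsilon$-equilibrium; and since every payoff is at most the total mass $1\ge p_i$, the very same deviation gives a gain $>\epsilon\ge\epsilon\,p_i$, so the profile is not a multiplicative $\epsilon$-equilibrium either. This simultaneous treatment of the two notions, via the crude bound $p_i\le1$, is what lets a single worst-case deviation rule out both kinds of $\epsilon$-equilibrium at once.
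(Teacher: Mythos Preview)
Your plan is essentially the paper's own approach: normalise total mass to $1$, order the three locations, split into the three coincidence patterns, use only the ``step next to an occupied point'' deviations, and deduce the multiplicative statement from the additive one via $p_i\le 1$. The ingredients and the overall architecture match.

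What is missing is the actual execution of patterns~(ii) and~(iii), which is where the constant $\tfrac{1}{14}$ comes from; you only announce a minimax and say you ``expect'' the constant to emerge. The paper does not in fact solve an optimisation problem. It argues by contradiction: assuming a profile is an additive $\epsilon$-equilibrium with $\epsilon<\tfrac{1}{14}$, it reads off inequalities on the strip masses from the no-profitable-deviation condition for the \emph{outer} players (not just the middle one), and then shows the middle or a paired player still has a deviation gaining more than $\tfrac{1}{14}$. Concretely, in pattern~(iii) the constraints for players~$1$ and~$3$ force $p_2\le\tfrac{1}{7}$, hence one of the four outer strips has mass at least $\tfrac{3}{14}$, giving player~$2$ a gain of at least $\tfrac{1}{14}$; in pattern~(ii) the same style of chaining yields a gain of $\tfrac{5}{28}$ for a paired player. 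Your remark that the middle payoff is position-independent is true but not the lever actually used; the binding inequalities come from the \emph{extreme} players' potential deviations toward the middle. If you carry out the two short chains of inequalities you will recover exactly $\tfrac{1}{14}$ without any parametric minimisation.
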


\begin{proof}
We prove it for additive $\epsilon$-equilibrium first, and prove that it implies the non existence of multiplicative $\epsilon$-equilibrium.\\

We can admit without loss of generality that $\int_0^1 f(x)dx=1$. Suppose first that $x_1<x_2<x_3$. Using the notation $x_{i,i+1}:=\frac{x_i+x_{i+1}}{2}$, we have that $\int_{x_{1,2}}^{x_2} f(x)dx \leq \frac{1}{14}$ and $\int_{x_2}^{x_{2,3}} f(x)dx \leq \frac{1}{14}$, otherwise player 1 (resp. 3) would deviate to $x_2-\delta$ (resp. $x_2+\delta$) and improve his payoff by more than $\frac{1}{14}$, for arbitrary small $\delta$. It follows that $p_2(x_1,x_2,x_3) \leq \frac{1}{7}$ and:

$$\int_0^{x_{1}} f(x)dx + \int_{x_1}^{x_{1,2}} f(x)dx + \int_{x_{2,3}}^{x_3} f(x)dx+ \int_{x_3}^1 f(x)dx = 1 - \int_{x_{1,2}}^{x_2} f(x)dx - \int_{x_2}^{x_{2,3}} f(x)dx \geq \frac{6}{7}$$

We conclude that at least one of these four terms is larger than $\frac{3}{14}$. Player $2$ can have a payoff arbitrary close to any of these quantity by playing respectively $x_1-\delta$, $x_1+\delta$ ,$x_3-\delta$ or $x_3+\delta$ with a small enough $\delta$. It leads to a contradiction because $\frac{3}{14}-\frac{1}{7}=\frac{1}{14}> \epsilon$.
\\

Suppose now that $x_1=x_2=x_3$, then $p_1=p_2=p_3=\frac{1}{3}$, but player 1 can get a payoff at least arbitrary close to $\frac{1}{2}$ by playing either $x_1-\delta$ or $x_1+\delta$ for a small enough $\delta$. We have a contradiction because $\frac{1}{2}-\frac{1}{3}=\frac{1}{6} > \frac{1}{14} > \epsilon$.\\

Suppose finally that $x_1=x_2<x_3$. The same argument used in the first case gives that $\int_{x_1}^{x_{2,3}} f(x)dx  \leq \frac{1}{14}$. It is also necessary that $\int_0^{x_1} f(x)dx \leq \int_{x_1}^{x_{2,3}} f(x)dx + \frac{1}{14}$ otherwise player 1 would deviate to $x_1+\delta$ for a small enough $\delta$. Therefore $\int_{x_{2,3}}^1 f(x)dx = 1 -  \int_0^{x_{2,3}} f(x)dx \geq \frac{11}{14}$. It results that either $\int_{x_{2,3}}^{x_3} f(x)dx \geq \frac{11}{28}$ or $\int_{x_3}^1 f(x)dx \geq \frac{11}{28}$. Player 1 can have a payoff arbitrary close to these quantities by playing $x_3-\delta$ or $x_3+\delta$ for small enough $\delta$. But $p_1(x_1,x_2,x_3)=\frac{\int_{0}^{x_{2,3}} f(x)dx}{2} \leq \frac{3}{14}$, so he can improve his payoff by $\frac{11}{28}-\frac{3}{14}=\frac{5}{28}>\frac{1}{14} > \epsilon$. In any case, there is no additive $\frac{1}{14}$-equilibrium.\\

We can also prove that there is no multiplicative $\epsilon$-equilibrium. Suppose that there exists one $\boldsymbol{x}$. If $x_{dev}$ is a profile of location obtained after a possible unilateral deviation from $\boldsymbol{x}$, we have for any $k \in \{1,2,3 \}$ that $p_k(x_{dev}) \leq (1+\epsilon) p_k(\boldsymbol{x})$. Because the payoff of any player is bounded by $1$ we have:

$$p_k(x_{dev}) - p_k(\boldsymbol{x}) \leq \epsilon p_k(\boldsymbol{x}) \leq \epsilon$$

We have a contradiction because $\boldsymbol{x}$ can not be an additive $\epsilon$-equilibrium.
\end{proof}

\section{Proof of theorem \ref{thm:main_thm}}\label{se:proof}

In the current section we give a proof for Theorem \ref{thm:main_thm}. As announced the sketch of the proof in section \ref{se:result}, the extensive proof is divided in 3 steps detailed in subsections \ref{subsec:approx}, \ref{subsec:eq_step} and \ref{subsec:eps_eq}.

\subsection{The approximation of the density by step functions}\label{subsec:approx}

In this subsection, we construct a step function  $g(\epsilon_1)$ for a fixed parameter $\epsilon_1$, and we prove that $g(\epsilon_1)$ is a good approximation of $f$ when $\epsilon_1$ is small.

\begin{definition}\label{def:ell_and_ie}
For a fixed $\epsilon_1>0$, the step function $g(\epsilon_1): G \rightarrow \mathbb{R}^+$ is defined edge by edge as follows. On an edge $e \in E$, the number of step is equal to $I_e:= \left\lceil \frac{\lambda_e K}{2 \epsilon_1}\right\rceil$, and all steps are semi open intervals\footnote{However, because we only consider consumers' distributions absolutely continuous with respect to the Lebesgue measure, the definition of $g$ on a singleton is not relevant.}of the same  length:

\begin{equation}\label{def:ell}
\ell(\epsilon_1):=\frac{\lambda_e}{\left\lceil \frac{\lambda_e K}{2 \epsilon_1}\right\rceil}
\end{equation}
It is clear that $\frac{\lambda_e}{\ell_e(\epsilon_1)}= I_e \in \mathbb{N}$. The function $g(\epsilon_1)$ is constant on a step, and its common value is equal to the value of $f$ at the middle point of the step. More precisely, if we denote $g_e(\epsilon_1)$ and $f_e$ is the restrictions of $g(\epsilon_1)$ and $f$ on the edge $e$ and if we use the natural identification between an edge $e$ and the real interval $[0,\lambda_e]$,\footnote{We use also the arbitrary orientation on the edges. The choice of this orientation does not play any role.}we have that:\\

$\left\{
\begin{array}{lll}
\text{First step: } (i=0)& \forall x \in [0,\ell_e(\epsilon_1)), & g_e(x)=f_e(\frac{\ell_e(\epsilon_1)}{2})
~~\\
...
 ~~\\
i^{th} \text{step: }& \forall x \in [i \ell_e(\epsilon_1),(i+1) \ell_e(\epsilon_1)),  & g_e(x)=f_e( (i+\frac{1}{2})\ell_e(\epsilon_1) )
  ~~\\
...
 ~~\\
\text{Last step: } (i=I_e-1)& \forall x \in [\lambda_e-\ell_e(\epsilon_1),\lambda_e), & g_e(x)=f_e(\lambda_e-\frac{\ell_e(\epsilon_1)}{2})\\
\end{array}
\right.$
\end{definition}

The next lemma shows that $g(\epsilon_1)$ is a good approximation of $f$ when $\epsilon_1$ is small enough. 

\begin{lemma}\label{le:f-g}
$|| f- g(\epsilon_1) ||_{\infty} \leq \epsilon_1$
\end{lemma}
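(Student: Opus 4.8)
The plan is to prove the bound pointwise on each edge and then take the supremum. Fix an edge $e \in E$ and, using the natural identification with $[0,\lambda_e]$, fix a point $x \in [0,\lambda_e)$. By construction $x$ lies in exactly one step, say the $i$-th one, so that $x \in [i\ell_e(\epsilon_1),(i+1)\ell_e(\epsilon_1))$ and $g_e(x) = f_e\big((i+\tfrac{1}{2})\ell_e(\epsilon_1)\big)$; in other words $g_e(x)$ is the value of $f_e$ at the midpoint $c_i := (i+\tfrac{1}{2})\ell_e(\epsilon_1)$ of the step containing $x$. The whole point of sampling $f$ at the midpoint is that every point of the step is within half a step length of $c_i$, so $|x - c_i| \leq \tfrac{1}{2}\ell_e(\epsilon_1)$.

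The first key step is to convert this into a bound on $|f_e(x) - g_e(x)|$ via the Lipschitz hypothesis \textbf{(C1)}. Since $x$ and $c_i$ lie on the same edge, their distance in $G$ equals $|x - c_i|$ under the identification with $[0,\lambda_e]$, and the $K$-Lipschitz property gives
$$|f_e(x) - g_e(x)| = |f_e(x) - f_e(c_i)| \leq K\,|x - c_i| \leq \frac{K\,\ell_e(\epsilon_1)}{2}.$$

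The second key step is to bound the step length $\ell_e(\epsilon_1)$ using the ceiling in its definition $(\ref{def:ell})$. Because $\left\lceil \frac{\lambda_e K}{2\epsilon_1}\right\rceil \geq \frac{\lambda_e K}{2\epsilon_1}$, we immediately get
$$\ell_e(\epsilon_1) = \frac{\lambda_e}{\left\lceil \frac{\lambda_e K}{2\epsilon_1}\right\rceil} \leq \frac{\lambda_e}{\frac{\lambda_e K}{2\epsilon_1}} = \frac{2\epsilon_1}{K}.$$
Substituting this into the previous inequality yields $|f_e(x) - g_e(x)| \leq \frac{K}{2}\cdot\frac{2\epsilon_1}{K} = \epsilon_1$.

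Finally I would observe that the point $x$ and the edge $e$ were arbitrary, and that the construction of $g(\epsilon_1)$ is done edge by edge, so the bound $|f(x) - g(\epsilon_1)(x)| \leq \epsilon_1$ holds for every $x \in G$; taking the supremum gives $\|f - g(\epsilon_1)\|_\infty \leq \epsilon_1$. There is no real obstacle here: the argument is a routine Lipschitz-plus-midpoint-sampling estimate, and the only thing requiring any care is the direction of the ceiling inequality and the fact that sampling at the midpoint (rather than an endpoint) halves the relevant distance, which is exactly what makes the constant come out as $\epsilon_1$ rather than $2\epsilon_1$.
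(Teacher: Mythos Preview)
Your proof is correct and follows essentially the same approach as the paper: both pick an arbitrary point, identify the step it lies in, use the $K$-Lipschitz condition together with the fact that the distance to the midpoint is at most $\ell_e(\epsilon_1)/2$, and then invoke the ceiling inequality $\left\lceil \frac{\lambda_e K}{2\epsilon_1}\right\rceil \ge \frac{\lambda_e K}{2\epsilon_1}$ to conclude. Your write-up is slightly more explicit about why the midpoint sampling halves the constant, but the argument is the same.
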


\begin{proof}
Let $x \in G$. Let $e,i$ such that $x \in e$ and $x \in [i \ell_e(\epsilon_1),(i+1) \ell_e(\epsilon_1)[$. Using the definition of $g(\epsilon_1)$ and the fact that $f$ is $K$-Lipschitz, we have:

\begin{align*}
|f(x)-g(x)| \leq & |f_e(x)-f_e( (i+\frac{1}{2}) i \ell_e(\epsilon_1))|
+ |f_e( (i+\frac{1}{2}) i \ell_e(\epsilon_1)) -g_e(x)| \\ 
\leq & K | x - (i+\frac{1}{2}) \ell_e(\epsilon_1) | +0\\
\leq &  K \frac{\ell_e(\epsilon_1)}{2} = K \frac{\lambda_e}{2\left\lceil \frac{\lambda_e K}{2 \epsilon_1}\right\rceil} \leq \epsilon_1
\end{align*} 
\end{proof}

The payoff function in the game $\mathcal{H}(n,S,f)$ is defined in equation (\ref{eq:payoff_function}). In subsection \ref{subsec:eq_step} we study the game $\mathcal{H}(n,S,g(\epsilon_1))$ where the consumers are distributed according to the density $g(\epsilon_1)$. The payoff function in this game is denote $\pi$ and is now formally defined.

\begin{definition}
We define the payoff function $\pi:=(\pi_1,\dots,\pi_n)$ in the game $\mathcal{H}(n,S,g(\epsilon_1))$. For a given profile of action $\boldsymbol{x}$, we have that:

\begin{equation}\label{eq:payoff_g}
\pi_{i}(\boldsymbol{x}) := \frac{1}{\card(\{j\in \{1,\dots,n \}: x_{j}=x_{i}\})}\sum_{A \subset \{x_1,\dots,x_n \} | x_{i}\in A } \frac{\tilde{\mu}(D_A)}{\card(A)}.
\end{equation}
where $D_A$ is defined in equation (\ref{def:C_A}) and where $\tilde{\mu}(D_A)= \int_{D_A} g(\epsilon_1)(x) d\mathcal{L}(x)$. 
\end{definition}

\subsection{The equilibrium with the step function density $g(\epsilon_1)$}\label{subsec:eq_step}

In this subsection, we provide a method to construct an (exact) equilibrium in the game with density function $g(\epsilon_1)$ and with a large enough number of players.\\

In subsection \ref{subsubse:description} we describe a profile of location $\boldsymbol{x}(\theta,\epsilon_1)$ for some fixed real numbers $\theta>0$ and $\epsilon_1>0$. Lemma \ref{le:nb_of_player_in_x} below is useful to compute the number $h(\theta,\epsilon_1)$ of players in such a configuration. In subsection \ref{subsubse:properties_of_x}, we prove that $\boldsymbol{x}(\theta,\epsilon_1)$ is an exact equilibrium in the game $\mathcal{H}(h(\theta,\epsilon_1),S,g(\epsilon_1))$. Before describing the profile $\boldsymbol{x}(\theta,\epsilon_1)$ we emphasize on its differences with existing constructions.\\

The method used to construct the auxiliary equilibrium in \ref{subsubse:description} is inspired by the ones used in \cite{palvolgyi11} and \cite{fournier14}. Nevertheless, the main difference is that we provide here a construction that depends on the steps of the function $g(\epsilon_1)$, as opposed to the mentioned constructions that describe edge-dependent models. The current construction depends on two parameters $\theta$ and $\epsilon_1$, and the lengths of the intervals between players depend on the value of the function $g(\epsilon_1)$. Note that in the papers mentioned above, it is not trivial to add "fake" vertices of degree 2 and to consider the steps of $g$ as artificial edges between such vertices. The existing constructions depends strongly on the number of vertices, and vertices of degree 2 are not taken into account: in particular it is not a necessary condition that for a large number of players, at least one player chose to play in a vertex with degree $2$, as opposed to vertices with degree at least $3$.

\subsubsection{Description of the profile $\boldsymbol{x}(\theta,\epsilon_1)$} \label{subsubse:description}

We now describe the profile $\boldsymbol{x}(\theta,\epsilon_1)$ for fixed parameters $\theta>0$ and $\epsilon_1>0$. In the definition of the function $g(\epsilon_1)$ in subsection \ref{subsec:approx}, we divide each edge $e \in E$ into $I_e$ steps, on which $g(\epsilon_1)$ is constant. It is important that every edge contains at least two steps, therefore in the sequel of the paper we always suppose $\epsilon_1<\frac{\min \lambda_e K}{2}$. We distinguish 3 different types of steps: the steps adjacent to a leaf, i.e. a vertex with degree $1$, the steps adjacent to a vertex of degree at least 3,\footnote{Remember we can assume without loss of generality that all vertices have degree different from 2, as explained in subsection \ref{subse:network}.} and the remaining steps, that we call interior steps. Steps of the same type have the same players' positioning, that we now descibe.\\

\begin{itemize}
\item[1/]  We first consider an interior step, that is not at an extremity of the edge. Denote $e$ the edge such that this step belongs to $e$ and $i \in \{1,\dots,I_e-2 \}$ such that the step belongs to the $i$-th step of the edge $e$ (i.e. it can be identified with the real interval $[i \ell_e(\epsilon_1),(i+1) \ell_e(\epsilon_1)]$). The value of $g$ on this step is $ g_e^i(\epsilon_1)=f_e((i+\frac{1}{2}) \ell_e(\epsilon_1))$. For simplicity, we denote it $g_e^i$.\\

There are a total of $\left\lceil \frac{g_e^i \ell_e (\epsilon_1)}{2  \theta} \right\rceil$ players in the interior of the edge and two players on each extremity of the step. At a distance $\frac{2 \theta}{g_e^i}$ from one of the boundary there are two players. All the other players are single, at distance $\frac{a_e^i \theta}{g_e^i}$ from each other and at distance $\frac{2 \theta}{g_e^i}$ from the coupled players, where $a_e^i$ is equal to:

\begin{equation}\label{def_alpha}
a_e^i=  \frac{ \frac{g_e^i \ell_e(\epsilon_1)}{\theta}-6 }{ \left\lceil \frac{ g_e^i \ell_e(\epsilon_1)}{2 \theta} \right\rceil -3 }
\end{equation}

\begin{figure}[H]

\centering

\begin{tikzpicture}[>=latex]

    \draw[-] (0,0) --(11,0);

    \node[] at (0,0) {$|$};
     \node[above=8pt] at (0,0) {$2$};
     
    \node[] at (1.5,0) {$|$};
     \node[above=8pt] at (1.5,0) {$2$};
     
   \node[] at (3,0) {$|$};
   \node[above=8pt] at (3,0) {$1$};
   
    \node[] at (4,0) {$|$};
    \node[above=8pt] at (4,0) {$1$};

       \node[] at (8.5,0) {$|$};
\node[above=8pt] at (8.5,0) {$1$};

    \node[] at (9.5,0) {$|$};
\node[above=8pt] at (9.5,0) {$1$};    
    
     \node[] at (11,0) {$|$};
\node[above=8pt] at (11,0) {$2$};

     \node[above=8pt] at (5,0) {...};
    \node[above=8pt] at (7.5,0) {...};

  \draw[|<->|] (0,-0.5) -- (1.5,-0.5);    
  \node[below=4pt] at (0.75,-0.5) {$\frac{2 \theta}{g_e^i}$};
  
  \draw[|<->|] (1.5,-0.5) -- (3,-0.5);    
  \node[below=4pt] at (2.25,-0.5) {$\frac{2 \theta}{g_e^i}$};
 
   \draw[|<->|] (3,-0.5) -- (4,-0.5);    
 \node[below=4pt] at (3.5,-0.5) {$\frac{a_e^i \theta}{g_e^i}$};

    \draw[|<->|] (8.5,-0.5) -- (9.5,-0.5);    
  \node[below=4pt] at (9,-0.5) {$\frac{a_e^i \theta}{g_e^i}$};

  \draw[|<->|] (9.5,-0.5) -- (11,-0.5);    
  \node[below=4pt] at (10.25,-0.5) {$\frac{2 \theta}{g_e^i}$};
 
    \end{tikzpicture}

\caption{\label{fig:desc_type_1} Players on a step of type 1 (interior step).}
\end{figure}
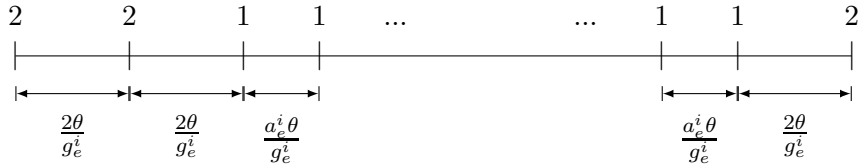

The previous description is compatible with the fact that the total length of the step is $\ell_e(\epsilon_1)$, because the parameter $a_e^i$ is such that the following equation holds:
$$\ell_e(\epsilon)=\frac{6\theta}{g_e^i}+ \frac{a_e^i \theta}{g_e^i} \times (\left\lceil \frac{g_e^i \ell_e (\epsilon_1)}{2  \theta} \right\rceil -3)$$

\item[2/] We consider now a step connected to a vertex $v$ with degree at least 3. There are $\left\lceil \frac{g_e^i \ell_e (\epsilon_1)}{2  \theta} \right\rceil$ players in the interior of this edge, plus $deg(v)$ players in the vertex $v$ and two players on the other extremity. There are 2 players at a distance $\frac{2\theta}{g_e^i}$ from $v$. All the other players are single, at distance $\frac{a_e^i \theta}{g_e^i}$ from each other and at distance $\frac{2 \theta}{g_e^i}$ from coupled players, where $a_e^i$ was defined in equation (\ref{def_alpha}).

\begin{figure}[H]

\centering

\begin{tikzpicture}[>=latex]

    \draw[-] (0,0) --(11,0);
    
    \node[] at (0,0) {$|$};
    \node[above=4pt] at (0,0) {$deg(v)$};

    \node[] at (1.5,0) {$|$};
    \node[above=8pt] at (1.5,0) {$2$};
    
    \node[] at (3,0) {$|$};
    \node[above=8pt] at (3,0) {$1$};
    
    \node[] at (4,0) {$|$};
    \node[above=8pt] at (4,0) {$1$};
    
       \node[above=8pt] at (5,0) {...};

       \node[above=8pt] at (7.5,0) {...};
     
             \node[] at (8.5,0) {$|$};
    \node[above=8pt] at (8.5,0) {$1$};

           \node[] at (9.5,0) {$|$};
    \node[above=8pt] at (9.5,0) {$1$};
    
     \node[] at (11,0) {$|$};
\node[above=8pt] at (11,0) {$2$};     
     
  \draw[|<->|] (0,-0.5) -- (1.5,-0.5);    
  \node[below=4pt] at (0.75,-0.5) {$\frac{2 \theta}{g_e^i}$};
  
  \draw[|<->|] (1.5,-0.5) -- (3,-0.5);    
  \node[below=4pt] at (2.25,-0.5) {$\frac{2 \theta}{g_e^i}$};
  
       \draw[|<->|] (3,-0.5) -- (4,-0.5);    
  \node[below=4pt] at (3.5,-0.5) {$\frac{a_e^i \theta}{g_e^i}$};

 \draw[|<->|] (8.5,-0.5) -- (9.5,-0.5);    
  \node[below=4pt] at (9,-0.5) {$\frac{a_e^i \theta}{g_e^i}$};    
    
  \draw[|<->|] (9.5,-0.5) -- (11,-0.5);    
  \node[below=4pt] at (10.25,-0.5) {$\frac{2 \theta}{g_e^i}$};

    \end{tikzpicture}

\caption{\label{fig:desc_type_2} Players on a step of type 2 (connected to a vertex $v$).}
\end{figure}
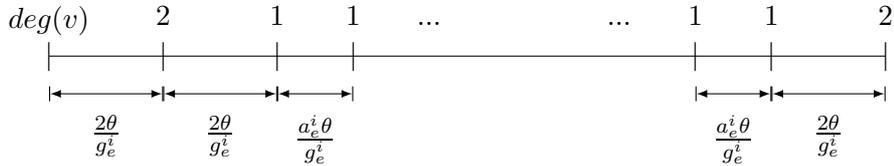

The previous description is also compatible with the fact that the total length of the step is $\ell_e(\epsilon_1)$, because the parameter $a_e^i$ is such that the following equation holds:
$$\ell_e(\epsilon)=\frac{6\theta}{g_e^i}+ \frac{a_e^i \theta}{g_e^i} \times (\left\lceil \frac{g_e^i \ell_e \epsilon_1)}{2  \theta} \right\rceil-3) $$

\item[3/] We now consider a step connected to a leaf. In the interior of the step there are $\left\lceil \frac{g_e^i \ell_e (\epsilon_1)}{2  \theta} \right\rceil+2$ players and one player is located at the extremity opposed to the the leaf. Two players are at distance $\frac{\theta}{g_e^i}$ and two players are at distance $\frac{3\theta}{g_e^i}$. The other players are single at distance $\frac{b_e^i \theta}{g_e^i}$ from each other, where:

\begin{equation}
b_e^i=  \frac{ \frac{g_e^i \ell_e(\epsilon_1)}{\theta}-7 }{ \left\lceil \frac{ g_e^i \ell_e(\epsilon_1)}{2 \theta} \right\rceil - 3 }
\end{equation}
\end{itemize}

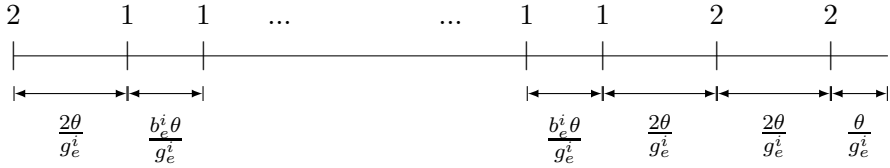
\begin{figure}[H]

\centering

\begin{tikzpicture}[>=latex]

    \draw[-] (0.5,0) --(12,0);
    \node[] at (2,0) {$|$};
    \node[] at (3,0) {$|$};

    \node[] at (0.5,0) {$|$};
    
    \node[] at (7.25,0) {$|$};
    
    \node[] at (8.25,0) {$|$};
    \node[] at (9.75,0) {$|$};
    \node[] at (11.25,0) {$|$};
    
    \node[above=8pt] at (0.5,0) {$2$};
    \node[above=8pt] at (2,0) {$1$};
    \node[above=8pt] at (3,0) {$1$};
    \node[above=8pt] at (4,0) {...};
     \node[above=8pt] at (6.25,0) {...};
    \node[above=8pt] at (7.25,0) {$1$};
    \node[above=8pt] at (8.25,0) {$1$};
    \node[above=8pt] at (9.75,0) {$2$};
    \node[above=8pt] at (11.25,0) {$2$};
    
    
  \draw[|<->|] (0.5,-0.5) -- (2,-0.5);    
  \node[below=4pt] at (1.25,-0.5) {$\frac{2 \theta}{g_e^i}$};
  
  \draw[|<->|] (2,-0.5) -- (3,-0.5);    
  \node[below=4pt] at (2.5,-0.5) {$\frac{b_e^i \theta}{g_e^i}$};

  
  
  
   \draw[|<->|] (7.25,-0.5) -- (8.25,-0.5);    
  \node[below=4pt] at (7.75,-0.5) {$\frac{b_e^i \theta}{g_e^i}$};

    \draw[|<->|] (8.25,-0.5) -- (9.75,-0.5);    
  \node[below=4pt] at (9,-0.5) {$\frac{2 \theta}{g_e^i}$};
  
  \draw[|<->|] (9.75,-0.5) -- (11.25,-0.5);    
  \node[below=4pt] at (10.5,-0.5) {$\frac{2 \theta}{g_e^i}$};

  \draw[|<->|] (11.25,-0.5) -- (12,-0.5);    
  \node[below=4pt] at (11.625,-0.5) {$\frac{ \theta}{g_e^i}$};
 
    \end{tikzpicture}

\caption{\label{fig:desc_type_3} Players in a step of type 3 (connected to a leaf).}
\end{figure}

The previous description is, once again, compatible with the fact that the total length of the step is $\ell_e(\epsilon_1)$, because the parameter $b_e^i$ is such that the following equation holds:
$$\ell_e(\epsilon)=\frac{7\theta}{g_e^i}+ \frac{b_e^i \theta}{g_e^i} \times (\left\lceil \frac{g_e^i \ell_e (\epsilon_1)}{2  \theta} \right\rceil-3) $$

We described the players' positioning on all the different types of steps. We now compute how many players are located on the graph in the described profile, with respect to the parameters $\theta$ and $\epsilon_1$.

\begin{lemma}\label{le:nb_of_player_in_x}
The total number of players in the profile $\boldsymbol{x}(\theta,\epsilon_1)$ is: 
$$h(\theta,\epsilon_1) := \sum_{e \in E} \sum_{i=0}^{I_e-1} \left\lceil \frac{g_e^i(\epsilon_1) \ell_e(\epsilon_1)}{2 \theta} \right\rceil + 2 \card(E_L)+ 2\sum_{e \in E} \left\lceil \frac{\lambda_e K}{2 \epsilon_1} \right\rceil  $$ where $E_L$ denotes the set of edges $(u,v)$ with $deg(u)=1$ or $deg(v)=1$.
\end{lemma}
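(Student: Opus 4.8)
The plan is to obtain $h(\theta,\epsilon_1)$ by a global tally in which every chosen location is counted exactly once: I split the players into those lying strictly inside a step (each such player belongs to a unique step) and those sitting at the extremities of the steps, that is at the internal step-boundaries, at the vertices of degree at least $3$, and at the leaves (each of the latter may be shared between several steps or edges). Beyond this bookkeeping the only external input I need is the handshake identity $\sum_{v\in V}\deg(v)=2\,\card(E)$, together with the standing assumption that no vertex has degree $2$, so that $V$ is partitioned into leaves and vertices of degree at least $3$, and the number of leaves equals $\card(E_L)$ since the graph is connected.

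First I would add up the interior contributions. From the three descriptions of subsection \ref{subsubse:description}, a step of type 1 or type 2 carries exactly $n_{e,i}:=\left\lceil g_e^i(\epsilon_1)\ell_e(\epsilon_1)/(2\theta)\right\rceil$ players strictly in its interior, whereas a leaf-adjacent step of type 3 carries $n_{e,i}+2$ (the two couples placed at distances $\theta/g_e^i$ and $3\theta/g_e^i$ from the leaf, together with the intermediate single players). Since the graph is connected and free of degree-$2$ vertices, each edge of $E_L$ contains exactly one type-3 step, so summing the interior contributions over all steps gives $\sum_{e\in E}\sum_{i=0}^{I_e-1}n_{e,i}+2\,\card(E_L)$, which is the first two summands of the claimed formula.

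It then remains to show that the players at the extremities amount to $2\sum_{e\in E}I_e$. Counting each extremity once, there are three kinds. Each of the $\sum_{e\in E}(I_e-1)$ internal step-boundaries carries one couple, for $2\sum_{e\in E}I_e-2\,\card(E)$ players. Each vertex $v$ of degree at least $3$ carries $\deg(v)$ players, and by the handshake identity together with the absence of degree-$2$ vertices one has $\sum_{\deg(v)\ge 3}\deg(v)=2\,\card(E)-\card(E_L)$. Finally each leaf contributes its single player, adding a further $\card(E_L)$. These three amounts collapse to exactly $2\sum_{e\in E}I_e$, and substituting $I_e=\left\lceil\lambda_e K/(2\epsilon_1)\right\rceil$ yields the stated value of $h(\theta,\epsilon_1)$.

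The step I expect to be the main obstacle is precisely this extremity bookkeeping, and in particular the treatment of the leaf-adjacent steps. One has to verify that the couple described at the extremity of a step is literally the same couple as the one described by its neighbour on the shared boundary, so that it enters the tally once rather than twice, and likewise that the $\deg(v)$ players placed at a vertex of degree $\deg(v)$ are not re-counted by each of the incident type-2 steps; the handshake identity is exactly what turns these per-vertex counts into the clean term $2\sum_{e\in E}I_e$. The asymmetric type-3 step is the delicate case: one must reconcile its far extremity with the boundary couple of the neighbouring step and account for the single player attached to the leaf, which is exactly the $+\card(E_L)$ needed to offset the $-\card(E_L)$ appearing in $\sum_{\deg(v)\ge 3}\deg(v)$. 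Once these incidences are pinned down the remaining computation is a routine summation.
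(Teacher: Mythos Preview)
Your argument is essentially the paper's own proof: both split the count into interior-of-step players versus boundary players, obtain the same interior total $\sum_{e,i}n_{e,i}+2\card(E_L)$, and then use the handshake identity $\sum_{v}\deg(v)=2\card(E)$ to collapse the boundary contributions to $2\sum_{e}I_e$. The only difference is cosmetic: the paper applies handshake in one stroke over all of $V$, whereas you partition $V$ into leaves and vertices of degree at least $3$ and observe that the resulting $-\card(E_L)$ and $+\card(E_L)$ cancel; this is just a finer regrouping of the same sum.

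One small caveat worth flagging, since you already identify the leaf bookkeeping as the delicate point: your assertion that ``each leaf contributes its single player'' is exactly what the paper's proof also uses (it writes ``in a vertex $v$ there are $\deg(v)$ players''), but the description of the type-3 step in subsection~\ref{subsubse:description} and its accompanying figure are not entirely explicit about a player sitting \emph{at} the leaf itself. Likewise, your identification of the number of leaves with $\card(E_L)$ tacitly excludes the degenerate single-edge graph. Both points are handled the same way in the paper's proof, so your argument is on equal footing, but if you want your write-up to be airtight you should state these incidences explicitly rather than leaving them to the reader.
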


\begin{proof}
We denote $g_e^i$ instead of $g_e^i(\epsilon_1)$. In the interior of every step that is not connected to a leaf (types $1$ and $2$) there are $\left\lceil \frac{g_e^i \ell_e(\epsilon_1)}{2 \theta} \right\rceil$ players. In the interior of every step connected to a leaft (type $3$) there are $\left\lceil \frac{g_e^i \ell_e(\epsilon_1) }{2 \theta} \right\rceil +2$ players and there are $\card(E_L)$ such steps. Because the number of steps in an edge $e$ is $I_e$, there are in total a number of players in the interior of steps equal to: $$\sum_{e \in E} \sum_{i=0}^{I_e-1} \left\lceil \frac{g_e^i \ell_e(\epsilon_1)}{2 \theta} \right\rceil + 2 \card(E_L)$$

Moreover, in the intersection of two steps in the same edge there are $2$ player. In the edge $e$, there are $\frac{\lambda_e}{\ell_e(\epsilon_2)}$ steps, and then $\frac{\lambda_e}{\ell_e(\epsilon_2)}-1$ such intersections. In a vertex $v$ there are $deg(v)$ players. Using that $\sum_{v \in V} deg(v)=2 \card(E)$ and the definition of $\ell_e(\epsilon_1)$, we have that the total number of players located at the intersections between steps is equal to:

$$2 \card(E) + 2 \sum_{e \in E}\left(\frac{\lambda_e}{\ell_e(\epsilon_2)}-1\right)= 2 \sum_{e \in E}\frac{\lambda_e}{\ell_e(\epsilon_2)}= 2 \sum_{e \in E} \left\lceil \frac{\lambda_e K}{2 \epsilon_1} \right\rceil $$

The sum of these two quantities is equal to the announced $h(\theta,\epsilon_1)$.
\end{proof}

\subsubsection{The profile $\boldsymbol{x}(\theta,\epsilon_1)$ is an equilibrium in $\mathcal{H}(h(\theta,\epsilon_1),S,g(\epsilon_1))$}\label{subsubse:properties_of_x}

In \ref{subsubse:description} we described the profile $\boldsymbol{x}(\theta,\epsilon_1)$ for fixed parameters $\theta>0$ and $\epsilon_1 \in (0,\frac{\min \lambda_e K}{2})$ and proved that it contains $h(\theta,\epsilon_1)$ players. We now prove that it is an (exact) equilibrium in the game $\mathcal{H}(h(\theta,\epsilon_1),S,g(\epsilon_1))$.\\

\begin{proposition}\label{prop:eq_with_g}
For any $\epsilon_1 \in (0,\frac{\min \lambda_e K}{2})$, if the number of player $n$ is larger than

\begin{equation}\label{def:M_esp_1}
\Omega(\epsilon_1):= 5 \card(E) +  \left\lceil \frac{5 L (M+\epsilon_1)}{(m-\epsilon_1)}\left(\frac{K}{2 \epsilon_1} + \frac{1}{\min \lambda_e}\right) +   \frac{3 L K}{2 \epsilon_1} \right\rceil
\end{equation} 
then there is a pure Nash equilibrium in $\mathcal{H}(n,S,g(\epsilon_1))$.
\end{proposition}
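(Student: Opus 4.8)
The plan is to show that the profile $\boldsymbol{x}(\theta,\epsilon_1)$, together with mild variants of it, is an exact equilibrium as soon as $\theta$ is small enough, and then to choose $\theta$ and the per-step player counts so that the total is exactly $n$. Throughout I would treat $\theta$ as a \emph{target payoff} and write $g_e^i$ for the constant value of $g(\epsilon_1)$ on the step under consideration. The whole argument rests on one bookkeeping principle for Hotelling competition on a line: since $g(\epsilon_1)$ is constant on each step, and since a coupled pair sits at every step boundary while $\deg(v)$ players sit at every vertex $v$, every maximal gap between consecutive chosen locations is contained in a region of constant density. Consequently a player who deviates and inserts himself strictly inside such a gap captures exactly half of the mass of that gap, whereas a deviation toward a dead end captures the whole leaf-ward segment up to the first midpoint. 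Establishing this principle reduces the entire verification to comparing half-gap masses with $\theta$.

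First I would compute the equilibrium payoffs. On the $\tfrac{2\theta}{g_e^i}$-spaced part of the profile a coupled pair reaches the midpoint $\tfrac{\theta}{g_e^i}$ on each side and captures a mass $2\theta$, i.e. $\theta$ per player; the $\deg(v)$ players at a vertex $v$ capture $\theta$ from each incident edge, again $\theta$ per player; and the two pairs placed at distances $\tfrac{\theta}{g_e^i}$ and $\tfrac{3\theta}{g_e^i}$ from a leaf each capture $\theta$, the $\tfrac{\theta}{g_e^i}$ offset being exactly what prevents the dead end from producing an oversized domain. A single player spaced $\tfrac{a_e^i\theta}{g_e^i}$ from its single neighbours captures $a_e^i\theta$ (and $\theta+\tfrac12 a_e^i\theta$ if one neighbour is a pair), and similarly with $b_e^i$ near a leaf. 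Using the defining equations of $a_e^i$ and $b_e^i$, a short computation gives $a_e^i,b_e^i\le 2$ unconditionally and $a_e^i,b_e^i\ge 1$ as soon as each step carries enough players; in that regime every player's payoff is at least $\theta$, with equality exactly for the coupled and vertex players.

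The heart of the proof is then to bound every unilateral deviation by $\theta$. By the principle above, a deviation inside an interior gap of length $\tfrac{2\theta}{g_e^i}$ (resp. $\tfrac{a_e^i\theta}{g_e^i}$) yields at most $\theta$ (resp. $\tfrac12 a_e^i\theta\le\theta$); a deviation just off a vertex onto one incident edge again yields at most $\theta$, because the vertex players stay strictly closer to the consumers on every other edge; and a deviation at a leaf yields at most $\theta$ precisely because the nearest pair sits at distance $\tfrac{\theta}{g_e^i}$ from the leaf. Coinciding with an existing single only splits its mass and is dominated. Since the density is constant on each step and $a_e^i,b_e^i\le 2$, no gap on $S$ has half-mass exceeding $\theta$, so the supremum of all deviation payoffs is exactly $\theta$; combined with the payoff lower bound $\theta$ of the previous step, no player can strictly improve, which is the asserted equilibrium. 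I expect this case analysis to be the main obstacle, and in particular the verification that it is the leaf and vertex gadgets, rather than the naive $\tfrac{2\theta}{g_e^i}$ spacing, that cap the deviation at $\theta$ — this is exactly where uniform-density constructions fail and where the step-by-step construction is needed.

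It remains to convert the parameter $\theta$ into the prescribed count $n$. The conditions $a_e^i,b_e^i\ge 1$ amount to requiring at least a fixed number of players per step, i.e. $\theta\le c\,g_e^i\ell_e$ for an absolute constant $c$; plugging in $m-\epsilon_1\le g(\epsilon_1)\le M+\epsilon_1$ from Lemma \ref{le:f-g}, $\ell_e=\lambda_e/\lceil \lambda_e K/2\epsilon_1\rceil$, and $\sum_{e,i} g_e^i\ell_e=\int_S g(\epsilon_1)\,dx$ (which is within $\epsilon_1\mathcal{L}(S)$ of $L$) bounds the smallest admissible $\theta$ and, through Lemma \ref{le:nb_of_player_in_x}, the associated number of players; this is what produces the explicit threshold $\Omega(\epsilon_1)$. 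Finally, to realise an arbitrary $n>\Omega(\epsilon_1)$ I would use that the verification above is insensitive to the exact number of single players on a given step: that number may be chosen to be any integer between $\lceil g_e^i\ell_e/2\theta\rceil$ and $g_e^i\ell_e/\theta-3$ while keeping $a_e^i\in[1,2]$, so incrementing one step at a time changes the total by one. As $\theta$ decreases, the realisable totals therefore cover every integer above $\Omega(\epsilon_1)$, which completes the proof.
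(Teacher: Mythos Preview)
Your approach is essentially the paper's: build $\boldsymbol{x}(\theta,\epsilon_1)$, show all payoffs lie in $[\theta,2\theta]$ with coupled/vertex players at exactly $\theta$, cap all deviation gains at $\theta$, and then adjust to hit the prescribed $n$. There is, however, one genuine gap in the equilibrium verification and one place where your counting mechanism differs from the paper's.

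\textbf{The gap.} Your core sentence ``no gap on $S$ has half-mass exceeding $\theta$, so the supremum of all deviation payoffs is exactly $\theta$'' is only valid for a player who leaves a \emph{shared} location. When a \emph{single} player $k$ deviates, the two gaps adjacent to $x_k$ merge into one, so the configuration against which the deviation is evaluated is not $\boldsymbol{x}(\theta,\epsilon_1)$ any more. In particular, deviating inside the merged gap $(x_{k-1},x_{k+1})$ yields $\tfrac12(x_{k+1}-x_{k-1})g_e^i$, which can be as large as $2\theta$; and deviating \emph{onto} a neighbour $x_{k\pm 1}$ does not ``split its mass'' in the sense you wrote, because that neighbour's domain of attraction has itself grown after $x_k$ left. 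The paper isolates exactly these cases (its Case~2): deviation inside $(x_{k-1},x_{k+1})$ returns the original payoff, and deviation to a neighbour is checked by direct computation in several sub-cases (single-to-single gives $\tfrac34 a_e^i\theta<a_e^i\theta$; single-to-adjacent-pair gives $\tfrac{2\theta}{3}+\tfrac{a_e^i\theta}{6}<\theta+\tfrac{a_e^i\theta}{2}$; etc.). None of this is hard, but your argument as written does not cover it, and the blanket bound ``deviation $\le\theta$'' is false for single players.

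\textbf{The counting step.} To realise an arbitrary $n$, the paper does \emph{not} vary the number of singles per step. It keeps $N_e^i=\lceil g_e^i\ell_e/2\theta\rceil$ fixed, uses right-continuity of $\theta\mapsto h(\theta,\epsilon_1)$ (a sum of $\sum_e I_e$ ceiling terms, each jumping by at most $1$) to land $h(\overline\theta,\epsilon_1)$ in $[n,\,n+\sum_e I_e]$, and then removes one player from up to one coupled pair per step; the remaining singleton has payoff $2\theta$ and the configuration stays an equilibrium. Your alternative (slide $N_e^i$ within the window that keeps $a_e^i\in[1,2]$) is legitimate and gives the same conclusion, but you should note that the equilibrium verification above has to be redone for each such variant, since the spacing parameters change; the paper's removal trick avoids this because only one location's payoff is affected and by an increase.
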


In the proof of Proposition \ref{prop:eq_with_g}, we construct a  pure Nash equilibrium that is a variation of the profile $\boldsymbol{x}(\theta,\epsilon_1)$, for a good parameter $\theta$. Before proving Proposition \ref{prop:eq_with_g}, we state and prove two useful lemma.

\begin{lemma}\label{le:4_claims}
Let $\epsilon_1 \in (0,\frac{\min \lambda_e K}{2})$ and suppose that the parameter $\theta$ satisfies $\theta \leq  \frac{g_e^i(\epsilon_1) \ell(\epsilon_1)}{10}$ for all $e \in E$ and all $i \in \{ 0, I_e-1\}$. Then the profile $\boldsymbol{x}(\theta,\epsilon_1)$ is such that we have the following statements:\\
\begin{itemize}
\item Claim 1: For all $e \in E$ and all $i \in \{ 0, I_e-1\}$ we have that $1 \leq a_e^i \leq 2$ and that $1 \leq b_e^i \leq 2$.\\

\item Claim 2: Every player that shares its location with at least one other player has a payoff equal to $\theta$.\\

\item Claim 3: The payoff of every player is at least $\theta$ and at most $2\theta$.\\

\item Claim 4: Because there are players located in every vertex, the part of the network between two neighbors' locations is always a segment, that can be identified with a real interval. No such interval contains a larger quantity of consumers than $2\theta$.\\
\end{itemize}

\end{lemma}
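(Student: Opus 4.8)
The plan is to prove the four claims in order, each using the previous ones. Throughout, write $x_e^i := \frac{g_e^i(\epsilon_1)\,\ell_e(\epsilon_1)}{2\theta}$, so that the standing hypothesis $\theta \le \frac{g_e^i(\epsilon_1)\ell_e(\epsilon_1)}{10}$ becomes $x_e^i \ge 5$, and the definitions of $a_e^i$ and $b_e^i$ read $a_e^i = \frac{2x_e^i-6}{\lceil x_e^i\rceil-3}$ and $b_e^i = \frac{2x_e^i-7}{\lceil x_e^i\rceil-3}$.

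Claim 1 is then purely arithmetic. Since $x_e^i\ge 5$ we have $\lceil x_e^i\rceil\ge 5$, so the common denominator $\lceil x_e^i\rceil-3\ge 2>0$ and every inequality can be cleared. The upper bounds $a_e^i\le 2$ and $b_e^i\le 2$ reduce respectively to $x_e^i\le\lceil x_e^i\rceil$ and $x_e^i-\frac12\le\lceil x_e^i\rceil$, which hold by definition of the ceiling; the lower bounds $a_e^i\ge1$ and $b_e^i\ge1$ reduce to $\lceil x_e^i\rceil\le 2x_e^i-3$ and $\lceil x_e^i\rceil\le 2x_e^i-4$, and since $\lceil x_e^i\rceil< x_e^i+1$ these follow from $x_e^i\ge4$ and $x_e^i\ge5$, both guaranteed by $x_e^i\ge5$.

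Claim 2 is the heart of the lemma and the step I expect to be the main obstacle. The construction is engineered so that, around any location shared by two or more players, each branch of its domain of attraction runs a distance $\frac{\theta}{g}$ into a step of value $g$ — precisely half the spacing $\frac{2\theta}{g}$ to the nearest location — and so carries exactly $g\cdot\frac{\theta}{g}=\theta$ of consumer mass. I would check this branch by branch according to how the location is shared: an interior coupled pair has two such branches, for total mass $2\theta$; a coupled pair at the junction of two steps of values $g_e^{i-1}$ and $g_e^i$ still has one branch of mass $\theta$ on each side, the density jump being absorbed by the matching spacing; the two coupled pairs placed next to a leaf are arranged so that the boundary branch, running over a length $\frac{\theta}{g}$ all the way to the leaf, again carries mass $\theta$; and a vertex $v$ of degree $d\ge3$ has one branch of mass $\theta$ along each incident edge, for total mass $d\theta$. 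In each case the total mass of the domain of the shared location equals (number of players there)$\times\theta$, and since \eqref{eq:payoff_g} splits it equally each player there receives exactly $\theta$.

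Finally I would settle Claims 3 and 4 together from the spacing data and Claim 1, the gaps between consecutive locations never straddling a step boundary and hence lying at a single constant value of $g$. For a single player the domain of attraction runs from the midpoint with its left neighbor to the midpoint with its right neighbor, so its payoff is half the mass of the left gap plus half the mass of the right gap; each adjacent gap is of the form $\frac{a_e^i\theta}{g}$, $\frac{b_e^i\theta}{g}$ or $\frac{2\theta}{g}$, hence of mass in $[\theta,2\theta]$ by Claim 1, so each half lies in $[\frac{\theta}{2},\theta]$ and the single player's payoff lies in $[\theta,2\theta]$; combined with Claim 2 this proves Claim 3. For Claim 4, because every vertex of degree at least $3$ carries players, no inter-neighbor region can straddle a branch point, so the portion of the network between two neighboring locations lies inside a single edge and is isometric to a real interval; its mass equals that of one gap, which is at most $2\theta$ by the same bound.
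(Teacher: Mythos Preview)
Your proposal is correct and follows essentially the same approach as the paper: the substitution $x_e^i=\frac{g_e^i\ell_e}{2\theta}$ is just a repackaging of the paper's arithmetic for Claim~1, and your branch-by-branch verification for Claim~2 and the gap-mass bounds for Claims~3 and~4 match the paper's reasoning, only spelled out in slightly more detail.
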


\begin{proof}
\noindent \underline{Proof of Claim 1:} Let $e \in E$ and $i \in \{ 0, I_e-1\}$. Remember that $a_e^i$ and $b_e^i$ are defined as:

$$a_e^i=  \frac{ \frac{g_e^i \ell_e(\epsilon_1)}{\theta}-6 }{ \left\lceil \frac{ g_e^i \ell_e(\epsilon_1)}{2 \theta} \right\rceil - 3 }$$

$$b_e^i=  \frac{ \frac{g_e^i \ell_e(\epsilon_1)}{\theta}-7 }{ \left\lceil \frac{ g_e^i \ell_e(\epsilon_1)}{2 \theta} \right\rceil - 3 }$$

We obviously have $b_e^i < a_e^i$. First remark that:
$$ \frac{g_e^i \ell_e(\epsilon_1)} {\theta}-6  = 2 \frac{g_e^i \ell_e(\epsilon_1)} {2 \theta}-6 \leq 2 \left\lceil \frac{ g_e^i \ell_e(\epsilon_1)}{2 \theta} \right\rceil - 6  = 2  (\left\lceil \frac{ g_e^i \ell(\epsilon_1)}{2 \theta} \right\rceil - 3 )$$
And it follows that $a_e^i \leq 2$. Moreover, because $\theta \leq  \frac{g_e^i \ell_e(\epsilon_1)}{10}$, we have that $5 \leq \frac{g_e^i \ell_e(\epsilon_1)}{2\theta}$. It implies that the second inequality below:

$$  \left\lceil \frac{g_e^i \ell_e(\epsilon_1) }{2 \theta} \right\rceil -3 \leq \frac{g_e^i \ell_e(\epsilon_1) }{2 \theta}-2  \leq \frac{g_e^i \ell_e(\epsilon_1)}{ \theta} -7 $$

And it follows that $1 \leq b_e^i$.\\

\noindent \underline{Proof of Claim 2:} It follows from the definition of $\boldsymbol{x}(\theta,\epsilon_1)$ that when $k \geq 2$ players are in the same location their payoffs are equal to $\frac{1}{k}$ times the sum of $k$ terms, each equal to $\frac{\theta}{g_i^e}\times g_e^i$ for a given $e \in E$ and a given $i \in \{1,\dots,I_e-1\}$ (on each direction, the location attracts an interval of consumers of length $\frac{\theta}{g_i^e}$, and the density of consumer is equal to $g_e^i$).\\

\noindent \underline{Proof of Claim 3:} Using Claim 2, we just have to prove it for single players. Depending on the type of the step this player belongs to and his position on the step, his payoff can be equal to $\frac{a_e^i \theta}{g_e^i}\times g_e^i$ or to $\frac{a_e^i \theta}{2g_e^i}\times g_e^i + \frac{2 \theta}{2g_e^i}\times g_e^i$ (in type 1 or 2) or to $\frac{b_e^i \theta}{g_e^i}\times g_e^i$ or to $\frac{b_e^i \theta}{2g_e^i}\times g_e^i + \frac{2 \theta}{2g_e^i}\times g_e^i$ (type 3), for some $e \in E$ and $i,i' \in \{1,\dots,I_e-1\}$. Using Claim 1, we have that $1 \leq b_e^i < a_e^i \leq 2$, and it results that his payoff belongs to $[\theta,2\theta]$.\\

\noindent \underline{Proof of Claim 4:} This claim is obvious since the largest length of such an interval is $\frac{2\theta}{g_e^i}$ and the density is $g_e^i$, for some $e \in E$ and $i \in \{1,\dots,I_e-1\}$.\\
\end{proof}

Lemma \ref{le:4_claims} is the toolbox to prove the next lemma.

\begin{lemma}\label{le:xi_small_equilibrium}
Suppose that $\epsilon_1 \in (0,\frac{\min \lambda_e K}{2})$ and that the parameter $\theta$ satisfies $\theta \leq  \frac{g_e^i(\epsilon_1) \ell(\epsilon_1)}{10}$ for all $e \in E$ and all $i \in \{ 0, I_e-1\}$. Then the profile $\boldsymbol{x}(\theta,\epsilon_1)$ is an equilibrium of $\mathcal{H}(h(\theta,\epsilon_1),S,g(\epsilon_1))$.
\end{lemma}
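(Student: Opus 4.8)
The plan is to show that no player has a profitable deviation from $\boldsymbol{x}(\theta,\epsilon_1)$, and the entire argument rests on the payoff bounds already established in Lemma \ref{le:4_claims}. By Claim 3, every player in the profile earns a payoff in the interval $[\theta, 2\theta]$. The key observation is therefore to bound from above the payoff that \emph{any} unilateral deviator can obtain: if I can show that after moving to an arbitrary new location $y$ a player can never capture more than $2\theta$, then combined with the fact that every player already receives at least $\theta$, I will not immediately have an equilibrium, so the real target must be sharper. The correct strategy is to prove that a deviation can capture \emph{strictly less} than the player's current payoff, or at most equal to it, by carefully analyzing the mass a deviating point can steal from its surroundings.

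First I would set up the deviation analysis by fixing a player and a candidate deviation location $y$. When a player leaves its original position, the consumers it was serving are redistributed among its former neighbors, and the deviator at $y$ now competes for the consumers near $y$. The essential geometric fact is that the new location $y$ can only attract consumers lying strictly closer to $y$ than to every other (unmoved) location. Since all the remaining players are spaced so that no inter-neighbor interval carries more than $2\theta$ worth of consumers (Claim 4), the deviator at $y$ sits inside one such interval (or at a vertex) and can grab at most the two half-intervals on either side of $y$, whose total mass is bounded by the mass of the interval it lands in plus contributions from adjacent intervals. The plan is to show that this captured mass is at most $2\theta$, and more precisely that it cannot exceed the payoff the player was already getting at equilibrium, because the tightest configurations (the coupled players earning exactly $\theta$, and single players earning up to $2\theta$) were designed so that the gap a deviator could exploit is exactly filled.

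The crux is a case analysis on where $y$ lands relative to the occupied locations, distinguishing whether the player was originally coupled (payoff $\theta$) or single (payoff in $[\theta,2\theta]$), and whether $y$ falls strictly between two neighbors, exactly on an occupied location, or at a vertex. In each case I would compute the best-response mass captured at $y$ using that the half-intervals flanking $y$ each carry at most $\theta$ of consumers by Claim 4 (since a full inter-neighbor interval carries at most $2\theta$), so the total is at most $2\theta$; for a player already earning $2\theta$ this is clearly not profitable, and for the coupled players earning $\theta$ one checks that the specific spacing (the $\frac{2\theta}{g_e^i}$ gaps around coupled players and the leaf-step arrangement with distances $\frac{\theta}{g_e^i}$ and $\frac{3\theta}{g_e^i}$) prevents a deviator from assembling a strictly larger interval.

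The main obstacle I anticipate is the boundary bookkeeping near vertices of degree at least $3$ and near leaves, where the interval structure is not simply two-sided: a deviation to a vertex can draw consumers from all incident edges simultaneously, so the naive ``two half-intervals'' bound must be replaced by a sum over incident edges, and one must verify that the precise choices $a_e^i$ and $b_e^i$ from \eqref{def_alpha} (together with the extra players placed at distances $\frac{\theta}{g_e^i}$ and $\frac{3\theta}{g_e^i}$ on leaf steps) were calibrated exactly so that even these multi-edge deviations stay within the equilibrium payoff. I expect this vertex and leaf analysis, rather than the interior-step case, to carry the real weight of the proof, and I would handle it by treating each incident half-interval separately and summing, invoking Claim 4 edge by edge.
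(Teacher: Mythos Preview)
Your plan has the right skeleton (case split on the deviator's origin and destination, payoff bounds from Lemma~\ref{le:4_claims}), but it contains a factor-of-two slip that is exactly the gap between ``almost a proof'' and ``a proof''. When the deviator lands at an unoccupied point $y$ strictly between two remaining occupied locations $x_j < y < x_{j'}$, the region it captures is $\bigl[\tfrac{x_j+y}{2},\,\tfrac{y+x_{j'}}{2}\bigr]$, whose length is $\tfrac{x_{j'}-x_j}{2}$ \emph{regardless of where $y$ sits}. Since $g(\epsilon_1)$ is constant on that interval (every inter-neighbor interval in the construction lies inside a single step), the captured mass is exactly half the interval's mass, hence at most $\theta$ by Claim~4 --- not $2\theta$. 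Your bound ``each flanking half-interval carries at most $\theta$, so the total is at most $2\theta$'' throws away the constraint that the two half-lengths sum to $\tfrac{x_{j'}-x_j}{2}$; with the correct bound $\theta$, the coupled-player case (payoff $=\theta$) closes immediately, which is precisely where your $2\theta$ bound was too weak.

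You also misjudge where the residual work lies. Deviations to a vertex $v$ of degree $\ge 3$ are \emph{easy}, because $\deg(v)$ players already sit there: the deviator becomes the $(\deg(v)+1)$-st sharer of a domain whose total mass is $\deg(v)\,\theta$ by Claim~2, and $\tfrac{\deg(v)}{\deg(v)+1}\theta < \theta$. Deviations to a leaf are equally easy: the nearest occupied point is at distance $\theta/g_e^i$, so the deviator captures mass $\theta/2$. The case that actually needs explicit computation is a \emph{single} player deviating to one of its own neighbors' locations (since removing the single player enlarges the surrounding intervals before the deviation is evaluated). The paper handles this by direct arithmetic with $a_e^i$ and $b_e^i$, checking that $\tfrac{\theta + a_e^i\theta}{2} \le \theta + \tfrac{a_e^i\theta}{2}$ and similar inequalities, all of which reduce to $1 \le a_e^i, b_e^i \le 2$ from Claim~1. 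That bookkeeping, not the vertex geometry, is what you should flag as the delicate part.
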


\begin{proof}
We will make an intensive use of Lemma \ref{le:4_claims}, that applies because $\theta \leq  \frac{g_e^i(\epsilon_1) \ell(\epsilon_1)}{10}$. Lemma \ref{le:nb_of_player_in_x} states that $\boldsymbol{x}(\theta,\epsilon_1)$ involves $h(\theta,\epsilon_1)$ players so it is a possible profile of actions in this game. For simplicity, we denote during this proof $m:=h(\theta,\epsilon_1)$, $g_e^i$ instead of $g_e^i(\epsilon_1)$ and $\boldsymbol{x}$ or $(x_1,\dots,x_m)$ the profile $\boldsymbol{x}(\theta,\epsilon_1)$. We now conclude that $\boldsymbol{x}$ is an equilibrium of $\mathcal{H}(m,S,g({\epsilon_1}))$. Consider any player $k$ in the profile $\boldsymbol{x}$ and denote $x_k$ his location. We study all possible deviations and conclude that none of them are profitable.\\

\underline{Case 1:} Suppose first that the player $k$ shares his position with at least one other player in the profile $\boldsymbol{x}$. This is the most simple case: the domain of attraction of $x_k$ does not change after the deviation of player $k$, because there is still at least one player left in $x_k$. Notice that because there is no tie on set with strictly positive measure in $\boldsymbol{x}$, the domain of attraction of $y$ is the set of consumers that shop at location $y$. Claim 1 of Lemma \ref{le:4_claims} assures that $p_k(\boldsymbol{x})$, the payoff of player $k$, is equal to $\theta$. We now consider his possible unilateral deviation to a location $y \in G$.  \\

\textit{Case 1-a:} If player $k$ deviates to a location $x_j$ that was chosen by exactly one player $j$ in the profile $\boldsymbol{x}$, we have:

$$\pi_k(x_1,\dots,x_{k-1},x_j,x_{k+1},\dots,x_m)=\frac{\pi_j(\boldsymbol{x})}{2} \leq \frac{2 \theta}{2} = \pi_k(\boldsymbol{x})$$

where the inequality comes from Claim 3 in \ref{le:4_claims}. Such a deviation is not profitable.\\

\textit{Case 1-b:} Suppose that player $k$ deviates to a location $y$ that was chosen by at least two players in the profile $\boldsymbol{x}$. Using Claim 2 in \ref{le:4_claims} and the fact that players in $y$ now have to share the domain of attraction with one more player, we have that:

$$\pi_k(x_1,\dots,x_{k-1},y,x_{k+1},\dots,x_m) < \theta = \pi_k(\boldsymbol{x})$$

The deviation is not profitable.\\

\textit{Case 1-c:} Suppose finally that player $k$ deviates in a location $y$ that belongs to an interval $[x_j,x_{j'}]$ between two neighbors $x_j$ and $x_{j'}$. It follows from the definition of $\boldsymbol{x}$ that the density $g(\epsilon_1)$ is constant on this interval. Therefore, by deviating to $y \in [x_j,x_{j'}]$ player $k$ attracts half the consumers located in $[x_j,x_{j'}]$. Using Claim 4 in Lemma \ref{le:4_claims} we have that:

$$\pi_k(x_1,\dots,x_{k-1},y,x_{k+1},\dots,x_m) \leq \frac{2 \theta}{2} = \pi_k(\boldsymbol{x})$$

And we proved that there is no profitable deviation for such a player.\\

\underline{Case 2:} Suppose now that the player $k$ doesn't share his location with another player in the profile $\boldsymbol{x}$. Claim 1 of Lemma \ref{le:4_claims} states that his payoff $p_k(\boldsymbol{x})$ is at least $\theta$. This second case is a bit more delicate because when the player $k$ deviates, it changes the domain of attraction of his neighbors. Without loss of generality we denote $x_{k-1}$ and $x_{k+1}$ the neighbors' location of $x_k$ ( a location with a single player always have left and right neighbor in $\boldsymbol{x}$). We denote $p_k(\boldsymbol{x}):=\frac{(x_{k+1}-x_{k-1})g_e^i}{2}$ his payoff. If the player $k$ deviates to a location that does not belongs to the interval $[x_{k-1},x_{k+1}]$ the situation is similar to the one considered in case 1.\\ 

\textit{Case 2-a:} Suppose first that the player $k$ deviates to a location $y$ in the interior of the interval $[x_{k-1},x_{k+1}]$. It follows from the definition of $\boldsymbol{x}$ that the density $g(\epsilon_1)$ is constant on this interval. After such a deviation his payoff would still be equal to $\frac{(x_{k+1}-x_{k-1})g_e^i}{2}$.\\

\textit{Case 2-b:} Suppose now that the player $k$ deviates to the location of one of his neighbors. Several sub-cases have to be considered. Suppose first that player $k$ is at distance $\frac{4 \theta}{g_e^i}$ from the left extremity of a step of type 1 or 2, see figure \ref{fig:desc_type_1} or \ref{fig:desc_type_2}. His payoff in this case satisfies:

$$ \pi_k(\boldsymbol{x}) = \left(\frac{\theta}{g_e^i}+\frac{a_e^i \theta}{2g_e^i}\right)g_e^i=\theta+\frac{a_e^i \theta}{2}$$

If such a player $k$ deviates to his right neighbor's location $y$ (i.e. at distance $\frac{4 \theta}{g_e^i}+\frac{a_e^i \theta}{g_e^i}$ from the left extremity), his payoff after deviation would satisfy:

$$ \pi_k(x_1,\dots,x_{k-1},y,x_{k+1},\dots,x_m) = \frac{\frac{2\theta+a_e^i \theta}{2 g_e^i}+\frac{a_e^i \theta}{2 g_e^i}}{2}g_e^i = \frac{\theta + a_e^i \theta}{2 } \leq \pi_k(\boldsymbol{x}) $$

Such a deviation is not profitable. If player $k$ deviates to his left neighbor's location $y$ (i.e. at distance $\frac{2 \theta}{g_e^i}$ from the left extremity), his payoff after deviation would satisfy:

$$\pi_k(x_1,\dots,x_{k-1},y,x_{k+1},\dots,x_m) = \frac{\frac{\theta}{g_e^i}+\frac{\theta}{g_e^i}+\frac{a_e^i \theta}{2 g_e^i}}{3} g_e^i = \frac{2 \theta}{3}+\frac{a_e^i \theta}{6}< \pi_k(\boldsymbol{x})  $$


Such a deviation is not profitable. The two previous arguments are also valid when we consider the player at distance $\frac{5\theta}{g_e^i}$ from the right extremity of a step of type 3 (see figure \ref{fig:desc_type_3}).

The last sub-case we have to consider is the case of a single player that is in a location at distance $\frac{a_e^i \theta}{g_e^i}$ (resp. $\frac{b_e^i \theta}{g_e^i}$) from his two neighbors, in an edge of type 1 or 2 (resp. 3). His payoff $p_k(\boldsymbol{x})$ is equal to $\frac{a_e^i \theta}{g_e^i}g_e^i=a_e^i \theta$ (resp. $b_e^i \theta$). If he deviates to one of his neighbor location that is also a player at distance $\frac{a_e^i \theta}{g_e^i}$ (resp. $\frac{b_e^i \theta}{g_e^i}$) from his two neighbors, the payoff after deviation would satisfy:

$$\pi_k(x_1,\dots,x_{k-1},y,x_{k+1},\dots,x_m) = \frac{\frac{3 a_e^i \theta}{2g_e^i}g_e^i}{2}\leq \frac{3}{4}\pi_k(\boldsymbol{x})$$

(resp. $\frac{\frac{3 b_e^i \theta}{2g_e^i}g_e^i}{2}\leq \frac{3}{4}\pi_k(\boldsymbol{x})$) Such a deviation is not profitable. If he deviates to his neighbor location that is at distance $\frac{a_e^i \theta}{g_e^i}$ (resp. $\frac{b_e^i \theta}{g_e^i}$) from one neighbor and at distance $\frac{2 \theta}{g_e^i}$ from his other neighbor, the payoff after deviation would satisfy:

$$\pi_k(x_1,\dots,x_{k-1},y,x_{k+1},\dots,x_m) = \frac{2 a_e^i \theta + 2 \theta}{4} = \frac{a_e^i \theta}{2}+\frac{\theta}{2} \leq \pi_k(\boldsymbol{x})$$

(resp. $\frac{2 b_e^i \theta + 2 \theta}{4} = \frac{b_e^i \theta}{2}+\frac{\theta}{2} \leq \pi_k(\boldsymbol{x})$ Such a deviation is not profitable. All possible cases were analyzed, and it follows that there is no possible deviation from $\boldsymbol{x}$, which is an equilibrium in $\mathcal{H}(m,S,g(\epsilon_1))$.\end{proof}

We are now ready to prove Proposition \ref{prop:eq_with_g} by modifying the profile $\boldsymbol{x}(\theta,\epsilon_1)$ with an adequate $\theta$.

\begin{proof}{of Proposition \ref{prop:eq_with_g}.\\}
This is a two steps proof. First we show that for a given $\theta \in \mathbb{R}$, the profile $\boldsymbol{x}(\theta,\epsilon_1)$ is a pure Nash equilibrium in the game $\mathcal{H}(n',S,g(\epsilon_1))$ with a number of players $n'$ such that $n \leq n' \leq n+\sum_e I_e$. Second, we prove that from this equilibrium profile, we can derive another profile of location $\hat{\boldsymbol{x}}(\epsilon_1)$ that is an equilibrium in $\mathcal{H}(n,S,g(\epsilon_1))$. Indeed, in the profile $\boldsymbol{x}(\theta,\epsilon_1)$ there are at least $\sum_e I_e$ players that are unnecessary: the same configuration without them is an equilibrium in the game with less players. We can therefore obtain an equilibrium in the game $\mathcal{H}(n,S,g(\epsilon_1))$ with exactly $n$ players. A similar technic of going thought an auxiliary equilibrium with extra unnecessary players was already used in \citep{fournier14}.\\

\textbf{\underline{First step:}} The function that maps $\theta \in \mathbb{R}^+$ to $h(\theta,\epsilon_1) \in \mathbb{N}$ is not necessarily onto. Nevertheless it is a sum of a constant and $\sum_{e \in E} I_e$ terms of the form $\left\lceil \frac{c}{\theta} \right\rceil$ with a constant $c$. Each term of this form is decreasing, right continuous and has jumps of amplitude 1, i.e. for every $\theta$:

$$ \lim\limits_{\substack{\theta \to a}} \left\lceil \frac{c}{\theta} \right\rceil - \left\lceil \frac{c}{a} \right\rceil  \leq 1$$

Moreover, we have that:

\begin{align*}
lim_{\theta \rightarrow 0} ~ h(\theta,\epsilon_1) =& +\infty \\
lim_{\theta \rightarrow +\infty} ~ h(\theta,\epsilon_1) =& \card(E_L)+\sum_{e \in E} I_e + 2 \sum_{e \in E} \left\lceil \frac{\lambda_e K}{2 \epsilon_1} \right\rceil
\end{align*} 

Therefore, because the function $\theta \mapsto h(\theta,\epsilon_1)$ is right continuous, for any $n \geq \card(E_L)+\sum_{e \in E} I_e + 2 \sum_{e \in E} \left\lceil \frac{\lambda_e K}{2 \epsilon_1} \right\rceil$\footnote{Remark that $\card(E_L)+\sum_{e \in E} I_e + 2 \sum_{e \in E} \left\lceil \frac{\lambda_e K}{2 \epsilon_1} \right\rceil \leq \Omega(\epsilon_1)$ so this constrain vanishes when we ask $n$ to be larger than $N(\epsilon_1)$} there exists a unique $\overline{\theta}$ which is the minimal solution to:

\begin{equation}\label{def:overxi}
n \leq h(\overline{\theta},\epsilon_1) \leq n+ \sum_{e \in E} I_e
\end{equation}

We now show that $\boldsymbol{x}(\overline{\theta},\epsilon_1)$ is an equilibrium with $n':=h(\overline{\theta},\epsilon_1)$ players. Suppose that $n \geq \Omega(\epsilon_1)$, where $\Omega(\epsilon_1)$ is defined in equation (\ref{def:M_esp_1}), then for all $e \in E$ and all $i \in \{0,\dots,I_e-1\}$, we have that:

$$h(\bar{\theta},\epsilon_1) \geq n \geq \Omega(\epsilon_1) \geq h(\frac{\ell_e g_e^i}{10},\epsilon_1) $$
where first inequality comes equation (\ref{def:overxi}) and last inequality is proved in Lemma \ref{le:comparison_N_h} in Annex and only requires computation. Because the function $\theta \rightarrow h(\theta,\epsilon_1)$ is decreasing it results that for all $e$ and $i$:

$$\overline{\theta} \leq \frac{\ell_e g_e^i}{10}$$

Therefore, Lemma \ref{le:xi_small_equilibrium} applies and proves that the profile $\boldsymbol{x}(\overline{\theta},\epsilon_1)$ is an equilibrium in the game $\mathcal{H}(n',S,g(\epsilon_1))$ with $n'$ players, where $n' \in [n, n+ \sum_{e \in E} I_e]$.\\

\textbf{\underline{Second step:}} We now provide a method to construct a profile $\hat{\boldsymbol{x}}(\epsilon_1)$ with exactly $n$ players and that is an equilibrium in  $\mathcal{H}(n,S,g(\epsilon_1))$. On the profile $\boldsymbol{x}(\overline{\theta},\epsilon_1)$ with $n'$ players, we have at most $\sum_e I_e$ too many players. This quantity is equal to the total number of steps on which $g(\epsilon_1)$ is constant. This is also the number of different steps in the construction of $\boldsymbol{x}(\overline{\theta},\epsilon_1)$. On each step, there are 2 players in the same location, at distance $\frac{2 \theta}{g_e^i}$ from their neighbors in both directions. We claim that one of these players is not necessary: it means that the same configuration without this player is still an equilibrium in the game $\mathcal{H}(n'-1,S,g(\epsilon_1))$. Indeed, in the new configuration, only the payoff of the player who shared his location with the removed player has changed. This payoff has increasing and is now equal to $2\theta$. A deviation to this location would give any player a payoff of $\theta$, which is not profitable. It results that there is no profitable deviation and the new profile of location is an equilibrium in the game $\mathcal{H}(n'-1,S,g(\epsilon_1))$. We can reiterate this argument with up to one player on each step, so in total with up to $\sum_e I_e$ players in the graph. We can therefore get an equilibrium in the game $\mathcal{H}(n,S,g(\epsilon_1))$ and this equilibrium is denoted $\hat{\boldsymbol{x}}(\epsilon_1)$.
\end{proof}

\subsection{$\epsilon$-equilibrium in the general game}\label{subsec:eps_eq}

In subsection \ref{subsec:eq_step} we constructed the profile $\hat{\boldsymbol{x}}(\epsilon_1)$ and proved that it is an (exact) equilibrium in $\mathcal{H}(n,S,g(\epsilon_1))$, provided that the number of  players $n$ satisfies $n \geq \Omega(\epsilon_1)$. In the current subsection, we prove that for any $\epsilon>0$ the profile $\hat{\boldsymbol{x}}(\epsilon_1)$ is moreover an $\epsilon$-equilibrium in $\mathcal{H}(n,S,f)$ in the case where $\epsilon_1$ is smaller than a given upper bound and where the number of  players $n$ continues to satisfy $n \geq \Omega(\epsilon_1)$.\\

Remember that the functions $p=(p_k)_{1 \leq k \leq n}$ and $\pi=(\pi_k)_{1 \leq k \leq n}$ are defined in equations (\ref{eq:payoff_function}) and  (\ref{eq:payoff_g}) as the payoff functions in the games $\mathcal{H}(n,S,f)$ and $\mathcal{H}(n,S,g(\epsilon_1))$, respectively. In the next lemma, the first claim expresses how close are the payoffs of the players when they play the profile $\boldsymbol{\hat{x}}(\epsilon_1)$ in $\mathcal{H}(n,S,g(\epsilon_1))$ or in $\mathcal{H}(n,S,f)$.\\

The second claim concerns a possible profile $x_{dev}$ with $n$ players after a deviation of a single player from $\hat{\boldsymbol{x}}(\epsilon_1)$. In other words if $\hat{\boldsymbol{x}}(\epsilon_1)=(x_1,\dots,x_n)$ then:

\begin{equation}\label{def:x_dev}
x_{dev}=(x_1,\dots,x_{k-1},y,x_{k+1},\dots,x_n)
\end{equation} 

for a given player $k$ and a given location $y \in G$.

\begin{lemma}\label{le:payoff_f_and_g_close}
For any $\epsilon_1 \in (0,\frac{\min \lambda_e K}{2})$, if $n \geq \Omega(\epsilon_1)$ then:

\begin{itemize}
\item Claim 1: $\boldsymbol{\hat{x}}(\epsilon_1)$ is such that for any player $k$ in $\{1,\dots,n\}$:

$$ \left| p_k(\boldsymbol{\hat{x}}(\epsilon_1)) - \pi_k(\boldsymbol{\hat{x}}(\epsilon_1)) \right| \leq 
\frac{4 \epsilon_1 \overline{\theta} }{m-\epsilon_1}$$

where $\overline{\theta}$ is defined in equation (\ref{def:overxi}).

\item Claim 2: the profile $x_{dev}$ defined in equation (\ref{def:x_dev}) is such that for any deviating player $k \in \{1,\dots,n\}$ we have:

$$ | p_k(x_{dev}) - \pi_k(x_{dev}) | \leq \frac{8 \epsilon_1 \overline{\theta} }{m-\epsilon_1}$$
\end{itemize}
\end{lemma}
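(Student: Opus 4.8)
The crucial observation I would start from is that the set $D_A$ defined in $(\ref{def:C_A})$ depends only on the distances between the chosen locations, and not at all on the density of consumers. Hence, for any fixed profile $\boldsymbol{x}$, the payoffs $p_k$ and $\pi_k$ of $(\ref{eq:payoff_function})$ and $(\ref{eq:payoff_g})$ are obtained by integrating $f$, respectively $g(\epsilon_1)$, over the \emph{very same} collection of sets. Subtracting the two and writing $c_k:=\card(\{j:x_j=x_k\})$, I get
\[ \bigl| p_k(\boldsymbol{x})-\pi_k(\boldsymbol{x})\bigr| \;=\; \frac{1}{c_k}\Bigl| \sum_{A\ni x_k}\frac{1}{\card(A)}\int_{D_A}\bigl(f-g(\epsilon_1)\bigr)\,dx\Bigr| \;\le\; \epsilon_1\,\Lambda_k, \]
where $\Lambda_k:=\frac{1}{c_k}\sum_{A\ni x_k}\frac{\mathcal{L}(D_A)}{\card(A)}$ is precisely the payoff player $k$ would receive under the uniform density $\mathbf{1}$, and the inequality uses Lemma \ref{le:f-g}. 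So the entire lemma reduces to bounding the Lebesgue \emph{length} $\Lambda_k$ of player $k$'s share of its domain of attraction.

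To control $\Lambda_k$ I would convert length into $g$-mass via the lower bound on the density. Condition \textbf{(C2)} gives $f\ge m$, hence by Lemma \ref{le:f-g} one has $g(\epsilon_1)\ge m-\epsilon_1>0$ pointwise (the positivity holds since the relevant regime has $\epsilon_1<m$). Consequently $\int_{D_A} g(\epsilon_1)\ge (m-\epsilon_1)\mathcal{L}(D_A)$ on each $D_A$, and summing as above yields $\pi_k(\boldsymbol{x})\ge (m-\epsilon_1)\Lambda_k$, i.e. $\Lambda_k\le \pi_k(\boldsymbol{x})/(m-\epsilon_1)$. Plugging this back in, both claims collapse to the single estimate
\[ \bigl|p_k(\boldsymbol{x})-\pi_k(\boldsymbol{x})\bigr|\;\le\;\frac{\epsilon_1\,\pi_k(\boldsymbol{x})}{m-\epsilon_1}, \]
so it only remains to bound the $g$-payoff $\pi_k$ in each of the two profiles.

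For Claim 1, evaluated at $\hat{\boldsymbol{x}}(\epsilon_1)$, Claim 3 of Lemma \ref{le:4_claims} (which survives the removal of the superfluous players in the second step of Proposition \ref{prop:eq_with_g}, the only affected payoffs rising to $2\overline{\theta}$) gives $\pi_k(\hat{\boldsymbol{x}}(\epsilon_1))\le 2\overline{\theta}$, so the difference is at most $2\epsilon_1\overline{\theta}/(m-\epsilon_1)$, comfortably below the announced $4\epsilon_1\overline{\theta}/(m-\epsilon_1)$. For Claim 2, evaluated at the deviated profile $x_{dev}$ of $(\ref{def:x_dev})$, I would bound $\pi_k(x_{dev})$ in either of two ways. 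The clean route uses that $\hat{\boldsymbol{x}}(\epsilon_1)$ is an exact equilibrium of $\mathcal{H}(n,S,g(\epsilon_1))$ by Proposition \ref{prop:eq_with_g}: no deviation is profitable under $g$, so $\pi_k(x_{dev})\le \pi_k(\hat{\boldsymbol{x}}(\epsilon_1))\le 2\overline{\theta}$. Alternatively, a purely geometric bound notes that deleting player $k$'s original location merges its two neighbouring intervals, each of $g$-mass at most $2\overline{\theta}$ by Claim 4 of Lemma \ref{le:4_claims}, so the new domain of attraction of $y$ carries $g$-mass at most $4\overline{\theta}$. Either way the bound $\pi_k(x_{dev})\le 4\overline{\theta}$ suffices and yields the claimed $8\epsilon_1\overline{\theta}/(m-\epsilon_1)$.

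The main obstacle is the bookkeeping behind these length/mass conversions rather than any deep idea: one must carry the sharing factors $\frac{1}{\card(A)}$ and $\frac{1}{c_k}$ consistently, so that $p_k$, $\pi_k$ and $\Lambda_k$ are all split in the same way; one must handle the degenerate cases where $y$ coincides with an existing location or lands on a step boundary where $g(\epsilon_1)$ jumps (so that $D_A$ splits across several constant pieces); and one must check $m-\epsilon_1>0$ over the range of $\epsilon_1$ actually used later. Once the density-independence of the $D_A$ is exploited, the factor-of-two gap between the two claims is exactly the price of a possible merging of two adjacent intervals under a unilateral deviation, and the constants $4$ and $8$ are deliberately loose over-estimates.
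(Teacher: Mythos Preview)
Your argument is correct and takes a genuinely different route from the paper. The paper argues geometrically: at $\hat{\boldsymbol{x}}(\epsilon_1)$ it observes that the domain of attraction of $x_k$ is a union of $\deg(x_k)$ intervals, each of Lebesgue length at most $2\overline{\theta}/g_e^i$, while at least $\deg(x_k)-1$ players share $x_k$; combining these with $\|f-g(\epsilon_1)\|_\infty\le\epsilon_1$ and $g_e^i\ge m-\epsilon_1$ gives the factor $\frac{\deg(x_k)}{\deg(x_k)-1}\le 2$ and hence the constant $4$. For Claim~2 it repeats this with the observation that after a deviation each interval can at most double in length, yielding the constant $8$.

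Your approach sidesteps all of this structural bookkeeping by the single inequality $\Lambda_k\le\pi_k/(m-\epsilon_1)$, which converts the length bound into a $g$-payoff bound and lets you invoke Claim~3 of Lemma~\ref{le:4_claims} (for Claim~1) and the equilibrium property of $\hat{\boldsymbol{x}}(\epsilon_1)$ under $g$ (for Claim~2) directly. This is cleaner and actually yields the sharper constants $2$ and $2$ (or $2$ and $4$ via your geometric alternative), well inside the stated $4$ and $8$. It also makes transparent the multiplicative estimate $|p_k-\pi_k|\le\frac{\epsilon_1}{m-\epsilon_1}\pi_k$, which is essentially what Proposition~\ref{prop:psi_phi} needs next. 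The only point to be careful about, which you flag, is that $m-\epsilon_1>0$; this is not guaranteed by the hypothesis $\epsilon_1<\frac{\min\lambda_e K}{2}$ alone, but holds for the values $\epsilon_1=\frac{\epsilon m}{12+\epsilon}<m$ actually used downstream.
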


\begin{proof} \underline{Proof of Claim 1:} It follows from the definitions of $p$ and $\pi$ that:

$$| p_k(\boldsymbol{\hat{x}}(\epsilon_1)) - \pi_k(\boldsymbol{\hat{x}}(\epsilon_1)) | = \left| \frac{1}{\card(\{j\in \{1,\dots,n \}: x_{j}=x_{k}\})}\sum_{A \subset \{x_1,\dots,x_n \} \atop x_{k}\in A } \frac{\int_{D_A} (g(\epsilon_1)(x)-f(x) ) d\mathcal{L}(x)}{\card(A)} \right|$$
where $D_A$ was defined in equation (\ref{def:C_A}). If we use the convention that $deg(x_k)=2$ if $x_k \in G \setminus V$ then the profile $\boldsymbol{\hat{x}}(\epsilon_1)$ is such that the number of players located in $x_k \in G$ is either $0$ or at least $deg(x_k)-1>0$. Using this property and the fact that $\| f - g(\epsilon_1)\|_{\infty} \leq \epsilon_1 $ (as proved in Lemma \ref{le:f-g}), we have that:

$$| p_k(\boldsymbol{\hat{x}}(\epsilon_1)) - \pi_k(\boldsymbol{\hat{x}}(\epsilon_1)) | \leq \frac{1}{
deg(x_k)-1}\sum_{A \subset \{x_1,\dots,x_n \} | x_{k}\in A } \frac{\mathcal{L}(D_A) \epsilon_1}{\card(A)} $$

The profile $\boldsymbol{\hat{x}}(\epsilon_1)$ also has the property that at least one player is located in every vertex. It implies that ties only occur between players in the same location.\footnote{We consider here only ties on a set with a strictly positive measure.} Therefore the sum $\displaystyle \sum_{A \subset \{x_1,\dots,x_n \} \atop x_k \in A}$ is reduced to a single term, where $A=\{ x_k \}$. It follows:

$$| p_k(\boldsymbol{\hat{x}}(\epsilon_1)) - \pi_k(\boldsymbol{\hat{x}}(\epsilon_1)) | \leq  \frac{1}{
deg(x_k)-1} \mathcal{L}(D_{\{x_k\}}) \epsilon_1 $$

It follows from the definition of $\boldsymbol{\hat{x}}(\epsilon_1)$ that the domain of attraction of the location $\{x_k \}$ is a union of $deg(x_k)$ intervals, and that such an interval has a length bounded by $\frac{2 \overline{\theta}}{g_e^i}$ for a given $e \in E$ and a given $i \in \{0,I_e-1\}$. It results:

$$| p_k(\boldsymbol{\hat{x}}(\epsilon_1)) - \pi_k(\boldsymbol{\hat{x}}(\epsilon_1)) | \leq  \frac{deg(x_k)}{
deg(x_k)-1}  \frac{2 \overline{\theta}}{g_e^i} \epsilon_1  $$

Because $deg(x_k)\geq 2$ and $g(\epsilon_1) \geq m-\epsilon_1$, we proved that:

$$| p_k(\boldsymbol{\hat{x}}(\epsilon_1)) - \pi_k(\boldsymbol{\hat{x}}(\epsilon_1)) | \leq   \frac{4 \epsilon_1 \overline{\theta}}{m-\epsilon_1}$$

\underline{Proof of claim 2:} Suppose first that the player $k$ deviates to a new location
$y$ that is not a leaf (i.e. a vertex with degree $1$). Using the same arguments detailed in the beginning of the proof of Claim 1 applied to $x_{dev}$, we obtain:

$$| p_k(x_{dev}) - \pi_k(x_{dev}) | \leq  \frac{1}{
deg(y)-1} \mathcal{L}(D_{\{x_k\}}) \epsilon_1 $$

It follows from the definition of $\boldsymbol{\hat{x}}(\epsilon_1)$ that, even after a unilateral deviation, the domain of attraction of the location $y$ is a union of $deg(y)$ intervals. Such an interval has a length at most twice as large as an interval in $\boldsymbol{\hat{x}}(\epsilon_1)$, i.e. a it has a length bounded above by $\frac{4 \overline{\theta}}{g_e^i}$ for a given $e \in E$ and a given $i \in \{0,I_e-1\}$. We get therefore:

$$| p_k(\boldsymbol{\hat{x}}(\epsilon_1)) - \pi_k(\boldsymbol{\hat{x}}(\epsilon_1)) | \leq  \frac{deg(x_k)}{
deg(x_k)-1}  \frac{4 \overline{\theta}}{g_e^i} \epsilon_1  $$

Because $deg(x_k)\geq 2$ and $g(\epsilon_1) \geq m-\epsilon_1$, we proved that:

$$| p_k(\boldsymbol{\hat{x}}(\epsilon_1)) - \pi_k(\boldsymbol{\hat{x}}(\epsilon_1)) | \leq   \frac{8 \epsilon_1 \overline{\theta}}{m-\epsilon_1}$$

If player $k$ deviates to a leaf $y$, then $\card(\{j\in \{1,\dots,n \}: x_{j}=y\})=1$ and its domain of attraction is a unique interval. We obtain  $|p_k(x_{dev}) - \pi_k(x_{dev}) | \leq \frac{4 \epsilon_1 \overline{\theta}}{m-\epsilon_1} \leq \frac{8 \epsilon_1 \overline{\theta}}{m-\epsilon_1}$ \end{proof}

\begin{proposition}\label{prop:psi_phi}
If $n \geq \Omega(\epsilon_1)$ then $\hat{\boldsymbol{x}}(\epsilon_1)$ is both an additive $\Psi[\epsilon_1]$-equilibrium, and a multiplicative $\Phi[\epsilon_1]$-equilibrium in $\mathcal{H}(n,S,f)$, i.e. for any $x_k \in G$:

$$ p_k(x_{dev}) \leq p_k(\hat{\boldsymbol{x}}(\epsilon_1))+\Psi(\epsilon_1) $$
and
$$ p_k(x_{dev}) \leq (1+\Phi(\epsilon_1)) p_k(\hat{\boldsymbol{x}}(\epsilon_1))$$
where $\Psi(\epsilon_1):=\frac{ 5\epsilon_1^2  (M+ \epsilon_1) }{12K(m-\epsilon_1)}$ and $\Phi(\epsilon_1):=\frac{12 \epsilon_1}{m-\epsilon_1}$.
\end{proposition}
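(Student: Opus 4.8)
The plan is to transfer the exact-equilibrium inequality that holds in the auxiliary game $\mathcal{H}(n,S,g(\epsilon_1))$ over to the true game $\mathcal{H}(n,S,f)$, paying a price that is controlled by Lemma \ref{le:payoff_f_and_g_close}. By Proposition \ref{prop:eq_with_g}, the profile $\hat{\boldsymbol{x}}(\epsilon_1)$ is an \emph{exact} Nash equilibrium in $\mathcal{H}(n,S,g(\epsilon_1))$, so for every player $k$ and every deviation profile $x_{dev}$ of the form (\ref{def:x_dev}) we already have $\pi_k(x_{dev}) \leq \pi_k(\hat{\boldsymbol{x}}(\epsilon_1))$. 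All the remaining work is to bound the discrepancy between the $f$-payoff $p_k$ and the $g(\epsilon_1)$-payoff $\pi_k$, both at $\hat{\boldsymbol{x}}(\epsilon_1)$ and at $x_{dev}$, and then to repackage a single additive gap into the two announced estimates.

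First I would establish the additive inequality. Applying Claim 2 of Lemma \ref{le:payoff_f_and_g_close} to the deviation profile and Claim 1 to the equilibrium profile gives
$$ p_k(x_{dev}) \leq \pi_k(x_{dev}) + \frac{8\epsilon_1\overline{\theta}}{m-\epsilon_1}, \qquad \pi_k(\hat{\boldsymbol{x}}(\epsilon_1)) \leq p_k(\hat{\boldsymbol{x}}(\epsilon_1)) + \frac{4\epsilon_1\overline{\theta}}{m-\epsilon_1}. $$
Chaining these two inequalities through the exact-equilibrium relation $\pi_k(x_{dev}) \leq \pi_k(\hat{\boldsymbol{x}}(\epsilon_1))$ yields
$$ p_k(x_{dev}) - p_k(\hat{\boldsymbol{x}}(\epsilon_1)) \leq \frac{12\epsilon_1\overline{\theta}}{m-\epsilon_1}, $$
which is the additive estimate expressed in terms of the auxiliary parameter $\overline{\theta}$ defined in (\ref{def:overxi}).

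It then remains to turn $\frac{12\epsilon_1\overline{\theta}}{m-\epsilon_1}$ into the stated $\Psi(\epsilon_1)$ and $\Phi(\epsilon_1)$. For the multiplicative bound I would divide the last display by $p_k(\hat{\boldsymbol{x}}(\epsilon_1))$: by Claim 3 of Lemma \ref{le:4_claims} every player's payoff in the $g(\epsilon_1)$-game is at least $\overline{\theta}$, and transferring this lower bound to the $f$-game gives $p_k(\hat{\boldsymbol{x}}(\epsilon_1)) \geq \overline{\theta}$, so that $p_k(x_{dev})/p_k(\hat{\boldsymbol{x}}(\epsilon_1)) \leq 1 + \frac{12\epsilon_1}{m-\epsilon_1} = 1 + \Phi(\epsilon_1)$, with the $\overline{\theta}$-dependence cancelling. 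For the additive bound I would instead estimate $\overline{\theta}$ from above: since $n \geq \Omega(\epsilon_1)$, the argument of Proposition \ref{prop:eq_with_g} forces $\overline{\theta} \leq \frac{g_e^i\,\ell_e(\epsilon_1)}{10}$ for all $e,i$, and combining this with $\ell_e(\epsilon_1) \leq \frac{2\epsilon_1}{K}$ and $g_e^i \leq M+\epsilon_1$ makes $\frac{12\epsilon_1\overline{\theta}}{m-\epsilon_1}$ at most $\Psi(\epsilon_1)$.

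The main obstacle is the control of the denominator in the multiplicative statement: the additive part is a mechanical chaining of Lemma \ref{le:payoff_f_and_g_close}, but the factor $\frac{12\epsilon_1}{m-\epsilon_1}$ in $\Phi$ is reached only if the equilibrium payoff $p_k(\hat{\boldsymbol{x}}(\epsilon_1))$ is bounded below by $\overline{\theta}$. This requires a careful transfer of the lower bound of Claim 3 from the $g(\epsilon_1)$-game to the $f$-game, exploiting, exactly as in the proof of Lemma \ref{le:payoff_f_and_g_close}, that at least one player sits on every vertex, so that ties occur only between colocated players and each relevant domain of attraction is a union of at most $deg(x_k)$ intervals of length bounded by $2\overline{\theta}/g_e^i$. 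The other delicate point is making the upper bound on $\overline{\theta}$ sharp enough for $\Psi$, given that $\overline{\theta}$ is only defined implicitly by $n \leq h(\overline{\theta},\epsilon_1) \leq n+\sum_e I_e$; the worst case is $n = \Omega(\epsilon_1)$, where $\overline{\theta}$ is largest.
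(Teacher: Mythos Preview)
Your approach is essentially the paper's: the same three-term chaining $p_k(x_{dev})-p_k(\hat{\boldsymbol{x}})=(p_k-\pi_k)(x_{dev})+(\pi_k(x_{dev})-\pi_k(\hat{\boldsymbol{x}}))+(\pi_k-p_k)(\hat{\boldsymbol{x}})$, the same use of Lemma~\ref{le:payoff_f_and_g_close} to get the bound $\frac{12\epsilon_1\overline{\theta}}{m-\epsilon_1}$, and the same two finishing moves. Your derivation of $\overline{\theta}\leq\frac{\epsilon_1(M+\epsilon_1)}{5K}$ from $\overline{\theta}\leq\frac{g_e^i\ell_e}{10}$, $\ell_e\leq\frac{2\epsilon_1}{K}$ and $g_e^i\leq M+\epsilon_1$ is exactly the content of the annex Lemma~\ref{le:ineq_xi}, which the paper invokes at that point. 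For the multiplicative part the paper simply cites Claim~3 of Lemma~\ref{le:4_claims} and divides by $\overline{\theta}$, without spelling out the transfer from $\pi_k$ to $p_k$ that you flag as the main obstacle; so you are being more careful than the paper here, not different from it.
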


\begin{proof} We have that:

$$ p_k(x_{dev})- p_k(\hat{\boldsymbol{x}}(\epsilon_1)) = \underbrace{p_k(x_{dev})- \pi_k(x_{dev})}_{A}+\underbrace{\pi_k(x_{dev})- \pi_k(\hat{\boldsymbol{x}}(\epsilon_1)) }_{B}+\underbrace{\pi_k(\hat{\boldsymbol{x}}(\epsilon_1)) -p_k(\hat{\boldsymbol{x}}(\epsilon_1)) }_{C}  $$

Because $\hat{\boldsymbol{x}}(\epsilon_1)$ is an equilibrium in $\mathcal{H}(n,S,g(\epsilon_1))$, we have that $B \leq 0$. Lemma \ref{le:payoff_f_and_g_close} proves that $C \leq \frac{4 \epsilon_1 \overline{\theta} }{m-\epsilon_1}$ and $A \leq \frac{8 \epsilon_1 \overline{\theta} }{m-\epsilon_1}$, we have therefore: 

\begin{equation}\label{eq:diff_payoff_avec_les_xi}
p_k(x_{dev})- p_k(\hat{\boldsymbol{x}}(\epsilon_1)) \leq \frac{12 \epsilon_1 \overline{\theta} }{m-\epsilon_1}
\end{equation}

We now use the fact that $n \geq \Omega(\epsilon_1)$ implies $\overline{\theta} \leq \frac{\epsilon_1(M+ \epsilon_1)}{5K}$, as proved by simple computation in Lemma \ref{le:ineq_xi} in Annex.

$$p_k(x_{dev})- p_k(\hat{\boldsymbol{x}}(\epsilon_1)) \leq \frac{ 5\epsilon_1^2  (M+ \epsilon_1) }{12K(m-\epsilon_1)}=\Psi(\epsilon_1)$$

On the other hand: $$\frac{p_k(x_{dev})}{p_k(\hat{\boldsymbol{x}}(\epsilon_1))}=     \frac{p_k(\hat{\boldsymbol{x}}(\epsilon_1)) + p_k(x_{dev})-p_k(\hat{\boldsymbol{x}}(\epsilon_1))}{p_k(\hat{\boldsymbol{x}}(\epsilon_1))}= 1+ \frac{p_k(x_{dev})-p_k(\hat{\boldsymbol{x}}(\epsilon_1))}{p_k(\hat{\boldsymbol{x}}(\epsilon_1))}$$

But, combining equation (\ref{eq:diff_payoff_avec_les_xi}) and Claim 3 of lemma \ref{le:4_claims}, we have that:

$$\frac{p_k(x_{dev})-p_k(\hat{\boldsymbol{x}}(\epsilon_1))}{p_k(\hat{\boldsymbol{x}}(\epsilon_1))} \leq \frac{12 \epsilon_1 \overline{\theta} }{(m-\epsilon_1)\overline{\theta}}=\frac{12 \epsilon_1}{m-\epsilon_1}=\Phi(\epsilon_1)$$
\end{proof}

Our main result (Theorem \ref{thm:main_thm}) is a straightforward consequence of Proposition \ref{prop:psi_phi}. Let indeed $\epsilon_1$ be such that $\Psi(\epsilon_1) \leq \epsilon$ (resp. $\Phi(\epsilon_1) \leq \epsilon$), it exists since $\Psi(\epsilon_1) \rightarrow 0$ and $\Phi(\epsilon_1) \rightarrow 0$ when $\epsilon_1 \rightarrow 0$, and let $n$ be such that $n \geq \Omega(\epsilon_1)$, then $\hat{\boldsymbol{x}}(\epsilon_1)$ is an additive (resp. multiplicative) $\epsilon$ equilibrium in $\mathcal{H}(n,S,f)$. \\

Because $\Phi(\epsilon_1) \leq \epsilon \Leftrightarrow \epsilon_1 \leq \frac{\epsilon m}{12+\epsilon}$, the existence of multiplicative equilibrium is guaranteed when the number of players is larger than $$N(\epsilon):=\Omega(\frac{\epsilon m}{12+\epsilon})=5 \card(E) +   \left\lceil  \frac{5L (M+\frac{\epsilon m}{12 + \epsilon})}{m-\frac{\epsilon m}{12 + \epsilon}}\left(\frac{(12 + \epsilon)K}{2 \epsilon m} + \frac{1}{\min \lambda_e}\right) +   \frac{3 L K (12 + \epsilon)}{2 \epsilon m} \right\rceil$$ where $L:= \int_S f(x)dx$ is the total quantity of consumers in the network, and $\min \lambda_e$ denotes the minimal length among the set $E$ of edges.

\section{Annex}\label{se:Annex}
\begin{lemma} \label{le:comparison_N_h}
Let $\epsilon_1 \in (0,\frac{\min \lambda_e K}{2})$, $\hat{e} \in E$ and $\hat{i} \in \{0,\dots,I_e-1\}$, then $\Omega(\epsilon_1) \geq h(\frac{\ell_{\hat{e}}g_{\hat{e}}^{\hat{i}}}{10},\epsilon_1)$ where $\Omega(\epsilon_1)$ is defined in equation (\ref{def:M_esp_1}).
\end{lemma}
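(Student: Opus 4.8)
The plan is to substitute $\theta=\tfrac{\ell_{\hat e}g_{\hat e}^{\hat i}}{10}$ directly into the formula for $h(\theta,\epsilon_1)$ given in Lemma~\ref{le:nb_of_player_in_x}, and to bound the resulting quantity term by term against the definition of $\Omega(\epsilon_1)$ in equation~(\ref{def:M_esp_1}). With this choice of $\theta$ the only $\theta$-dependent sum has summand $\big\lceil\tfrac{g_e^i\ell_e}{2\theta}\big\rceil=\big\lceil 5\,\tfrac{g_e^i}{g_{\hat e}^{\hat i}}\,\tfrac{\ell_e}{\ell_{\hat e}}\big\rceil$, so the whole estimate reduces to controlling this ratio and keeping honest track of the roundings.

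First I would strip every ceiling via $\lceil x\rceil\le x+1$, which splits the first sum into a genuine contribution plus an overhead of one unit per step, i.e.\ an extra $\sum_e I_e$ players. For the genuine contribution I would use the crude but clean bounds $g_e^i\le M+\epsilon_1$ and $g_{\hat e}^{\hat i}\ge m-\epsilon_1$, which follow from condition~\textbf{(C2)} together with $\|f-g(\epsilon_1)\|_\infty\le\epsilon_1$ (Lemma~\ref{le:f-g}); these produce the factor $\tfrac{M+\epsilon_1}{m-\epsilon_1}$. Since each edge is exactly partitioned into its steps, $\sum_e\sum_i\ell_e=\sum_e\lambda_e$ is the total length of the network, which I bound by the total mass $L$. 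Finally I would estimate $\tfrac{1}{\ell_{\hat e}}=\tfrac{I_{\hat e}}{\lambda_{\hat e}}=\tfrac{1}{\lambda_{\hat e}}\big\lceil\tfrac{\lambda_{\hat e}K}{2\epsilon_1}\big\rceil\le\tfrac{K}{2\epsilon_1}+\tfrac{1}{\min\lambda_e}$, again by $\lceil x\rceil\le x+1$. Together these yield exactly the main term $\tfrac{5L(M+\epsilon_1)}{m-\epsilon_1}\big(\tfrac{K}{2\epsilon_1}+\tfrac{1}{\min\lambda_e}\big)$ of $\Omega(\epsilon_1)$.

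It then remains to absorb the $\theta$-independent overheads. The overhead $\sum_e I_e$ coming from the first sum combines with the third sum $2\sum_e I_e$ of $h$ into $3\sum_e I_e$; estimating each $I_e=\lceil\tfrac{\lambda_e K}{2\epsilon_1}\rceil\le\tfrac{\lambda_e K}{2\epsilon_1}+1$ and again using $\sum_e\lambda_e\le L$ gives $3\sum_e I_e\le\tfrac{3LK}{2\epsilon_1}+3\,\card(E)$, which supplies the $\tfrac{3LK}{2\epsilon_1}$ term. Adding the remaining $2\,\card(E_L)\le 2\,\card(E)$ to the $3\,\card(E)$ just produced gives precisely the leading $5\,\card(E)$. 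As the sum of all non-constant contributions equals exactly the expression inside the ceiling in $\Omega(\epsilon_1)$ and $\lceil x\rceil\ge x$, we conclude $h(\tfrac{\ell_{\hat e}g_{\hat e}^{\hat i}}{10},\epsilon_1)\le\Omega(\epsilon_1)$.

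The computation itself is routine; the care lies entirely in the bookkeeping. The delicate point is to track the separate families of unit overheads produced by $\lceil x\rceil\le x+1$ — the per-step roundings of the first sum, the threefold copy $3\sum_e I_e$ whose own ceilings contribute $3\,\card(E)$, and the term $2\,\card(E_L)$ — and to verify that they recombine into \emph{exactly} $5\,\card(E)$ rather than something larger, so that the bound matches the stated closed form after the (slightly lossy) replacement of the total length $\sum_e\lambda_e$ by the total mass $L$.
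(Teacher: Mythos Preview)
Your approach is exactly the paper's: substitute $\theta=\ell_{\hat e}g_{\hat e}^{\hat i}/10$ into the formula for $h$, strip the ceilings via $\lceil x\rceil\le x+1$, apply $m-\epsilon_1\le g_e^i\le M+\epsilon_1$, and collect the per-step and per-edge unit overheads into $5\,\card(E)$. The bookkeeping you describe matches the paper's computation essentially line for line.

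There is, however, one step that does not go through as written. You twice replace the total length $\sum_e\lambda_e$ by the total mass $L=\int_G f$, calling this ``slightly lossy''. But that replacement is an \emph{inequality in the wrong direction} unless the average of $f$ is at least $1$: condition~\textbf{(C2)} only guarantees $f\ge m$ for some $m>0$, and nothing forces $m\ge 1$. If for instance $f\equiv\tfrac12$, then $L$ is half the total length and both your main term $\tfrac{5L(M+\epsilon_1)}{m-\epsilon_1}(\tfrac{K}{2\epsilon_1}+\tfrac{1}{\min\lambda_e})$ and your $\tfrac{3LK}{2\epsilon_1}$ term are too small by a factor of two. The paper's own proof does not bound total length by total mass; it instead \emph{writes} $L:=\sum_e\lambda_e$ in the Annex, i.e.\ it takes $L$ to be the total length throughout the computation (this clashes with the wording in Theorem~\ref{thm:main_thm}, but it is what makes the displayed $\Omega(\epsilon_1)$ come out). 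Your argument becomes correct once $L$ is read as the total length; the step ``bound total length by total mass'' is not a valid substitute for that identification.
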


\begin{proof}
It follows from the definition of $\ell_e$ and the inequality $m-\epsilon_1 \leq g_e \leq M + \epsilon_1$ that:

\begin{align*}
h(\frac{\ell_{\hat{e}}g_{\hat{e}}^{\hat{i}}}{8},\epsilon_1)=& 2 \card(E_L) + \sum_e \sum_i \left\lceil \frac{10}{2} \frac{\ell_e(\epsilon_1) g_e^i}{\ell_{\hat{e}(\epsilon_1)}g_{\hat{e}}^{\hat{i}}} \right\rceil + 2 \sum_e \left\lceil \frac{\lambda_e K}{2 \epsilon_1} \right\rceil \\
\leq & 2 \card(E_L) + \sum_e  \left\lceil \frac{\lambda_e K}{2 \epsilon_1} \right\rceil  \left\lceil 5 \frac{\lambda_e   \left\lceil\frac{\lambda_{\hat{e}}K}{2 \epsilon_1}\right\rceil(M+\epsilon_1)}{\lambda_{\hat{e}} \left\lceil\frac{\lambda_{e}K}{2 \epsilon_1}\right\rceil (m-\epsilon_1)} \right\rceil+ 2 \sum_e \left\lceil \frac{\lambda_e K}{2 \epsilon_1} \right\rceil
\end{align*}

Moreover, because $x \leq \left\lceil x \right\rceil \leq x+1$ we also have:
\begin{align*}
h(\frac{\ell_{\hat{e}}g_{\hat{e}}^{\hat{i}}}{8},\epsilon_1) \leq& 2 \card(E_L) + \sum_e   \left( 5 \frac{\lambda_e (\frac{\lambda_{\hat{e}}K}{2 \epsilon_1}+1) (M+\epsilon_1)}{\lambda_{\hat{e}} (m-\epsilon_1)} \right) +3 \sum_e \left\lceil \frac{\lambda_e K}{2 \epsilon_1} \right\rceil \\
\leq& 2 \card(E_L) + \frac{5}{2} \sum_e    \frac{\lambda_e K (M+\epsilon_1)}{\epsilon_1 (m-\epsilon_1)} +\sum_e 5 \frac{\lambda_e}{\lambda_{\hat{e}}}\frac{M+\epsilon_1}{m-\epsilon_1} +3 \sum_e \left\lceil \frac{\lambda_e K}{2 \epsilon_1} \right\rceil \\
\end{align*}

Finally, we use the notation $L:= \sum_e \lambda_e$ and the fact that  $\sum_e \left\lceil \frac{\lambda_e K}{2 \epsilon_1} \right\rceil \leq \card(E)+\frac{L K}{2 \epsilon_1}$.
\begin{align*}
h(\frac{\ell_{\hat{e}}g_{\hat{e}}^{\hat{i}}}{8},\epsilon_1) \leq &  2 \card(E_L) +   \frac{ 5 L K (M+\epsilon_1)}{2 \epsilon_1 (m-\epsilon_1)} +  \frac{5 L}{\min \lambda_e}\frac{M+\epsilon_1}{m-\epsilon_1} +3 \card (E) +   \frac{3 L K}{2 \epsilon_1}  \\
\leq& 5 \card(E) +   \frac{5 L (M+\epsilon_1)}{(m-\epsilon_1)}\left(\frac{K}{2 \epsilon_1} + \frac{1}{\min \lambda_e}\right) +   \frac{3 L K}{2 \epsilon_1} \leq \Omega(\epsilon_1)  \\
\end{align*}
\end{proof}

\begin{lemma}\label{le:ineq_xi}
If $n \geq \Omega(\epsilon_1)$ then $\overline{\theta} \leq \frac{\epsilon_1 (M+ \epsilon_1)}{4K}$
\end{lemma}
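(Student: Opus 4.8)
The plan is to reduce this to the comparison already established in Lemma \ref{le:comparison_N_h}, rather than re-running an estimate on $h$ from scratch. First I would record the two facts that pin down $\overline{\theta}$: by its defining property $(\ref{def:overxi})$ we have $h(\overline{\theta},\epsilon_1)\geq n$, and the hypothesis gives $n\geq \Omega(\epsilon_1)$, so that $h(\overline{\theta},\epsilon_1)\geq \Omega(\epsilon_1)$. On the other hand, Lemma \ref{le:comparison_N_h} states that for every $e\in E$ and every $i\in\{0,\dots,I_e-1\}$ one has $\Omega(\epsilon_1)\geq h\bigl(\tfrac{\ell_e g_e^i}{10},\epsilon_1\bigr)$. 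Chaining these two inequalities gives $h(\overline{\theta},\epsilon_1)\geq h\bigl(\tfrac{\ell_e g_e^i}{10},\epsilon_1\bigr)$, and since $\theta\mapsto h(\theta,\epsilon_1)$ is decreasing (as noted in the first step of the proof of Proposition \ref{prop:eq_with_g}), this forces $\overline{\theta}\leq \tfrac{\ell_e g_e^i}{10}$ for all $e$ and $i$.

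It then remains only to bound $\ell_e g_e^i$ from above by a quantity independent of $e$ and $i$. Here I would use two elementary estimates. From $\ell_e(\epsilon_1)=\lambda_e/\left\lceil \tfrac{\lambda_e K}{2\epsilon_1}\right\rceil$ together with $\lceil x\rceil\geq x$ we obtain $\ell_e(\epsilon_1)\leq \tfrac{2\epsilon_1}{K}$; and since $g_e^i$ is by construction the value $f_e\bigl((i+\tfrac12)\ell_e(\epsilon_1)\bigr)$ of $f$, condition \textbf{(C2)} gives $g_e^i\leq M\leq M+\epsilon_1$. Multiplying the two bounds yields $\ell_e g_e^i\leq \tfrac{2\epsilon_1}{K}(M+\epsilon_1)$, whence $\overline{\theta}\leq \tfrac{\ell_e g_e^i}{10}\leq \tfrac{\epsilon_1(M+\epsilon_1)}{5K}\leq \tfrac{\epsilon_1(M+\epsilon_1)}{4K}$, which is the claim.

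There is no serious obstacle in this argument once Lemma \ref{le:comparison_N_h} is in hand; the only points to watch are that the ceiling inequality be applied in the direction that produces an \emph{upper} bound on $\ell_e$, and that the monotonicity of $h$ be invoked correctly to pass from an inequality between $h$-values to the reversed inequality between the arguments $\theta$. I would also remark that the computation in fact delivers the sharper bound $\overline{\theta}\leq \tfrac{\epsilon_1(M+\epsilon_1)}{5K}$, which is precisely the form used in the proof of Proposition \ref{prop:psi_phi}; the stated bound with $4K$ is merely its weaker consequence via $\tfrac{1}{5K}\leq\tfrac{1}{4K}$.
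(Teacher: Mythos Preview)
Your proof is correct and follows essentially the same route as the paper's own argument: chain $h(\overline{\theta},\epsilon_1)\geq n\geq \Omega(\epsilon_1)\geq h(\tfrac{\ell_e g_e^i}{10},\epsilon_1)$ via Lemma~\ref{le:comparison_N_h}, use the monotonicity of $h$ to deduce $\overline{\theta}\leq \tfrac{\ell_e g_e^i}{10}$, and then bound $\ell_e\leq \tfrac{2\epsilon_1}{K}$ and $g_e^i\leq M+\epsilon_1$. Your observation that the computation actually yields the sharper constant $5K$ (which is what is invoked in Proposition~\ref{prop:psi_phi}) is also exactly what the paper obtains.
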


\begin{proof}
Remember that $\overline{\theta}$ was defined in \ref{def:overxi} as a real number such that $n \leq h(\overline{\theta}) \leq n+ \sum_{e \in E} I_e$. We suppose here that $n \geq \Omega(\epsilon_1)$ and we have, thanks to \ref{le:comparison_N_h}, $\Omega(\epsilon_1) \geq h(\frac{\ell_e g_e^i}{10},\epsilon_1)$. Putting all these inequalities together gives:

$$ h(\overline{\theta},\epsilon_1) \geq h(\frac{\ell_e g_e^i}{10},\epsilon_1)$$

Because $h$ is a decreasing function, and replacing $\ell$ by its definition in Definition \ref{def:ell_and_ie}, we get:

$$ \overline{\theta} \leq \frac{\lambda_e g_e^i}{10 \left\lceil \frac{\lambda_e K}{2 \epsilon_1}\right\rceil} \leq \frac{\epsilon_1 g_e^i}{5K}$$

Using the inequality $g_e^i \leq M+\epsilon$ we obtain:

$$ \overline{\theta} \leq \frac{\epsilon_1 (M+ \epsilon_1)}{5K}$$
\end{proof}

\bibliographystyle{plainnat}
\bibliography{nounif}

\end{document}